\date{}
\providecommand{\U}[1]{\protect \rule{.1in}{.1in}}
\newtheorem{theorem}{Theorem}
\newtheorem{definition}[theorem]{Definition}
\newtheorem{lemma}[theorem]{Lemma}
\newtheorem{notation}[theorem]{Notation}
\newtheorem{remark}[theorem]{Remark}
\begin{document}

\title{Basic Properties of \\Singular Fractional Order System with order (1,2)}
\author{Xiaogang Zhu, Jie Xu and Junguo Lu
\thanks{Jie Xu and Junguo Lu are with the School of Electronic Information
and Electrical Engineering, Shanghai Jiao Tong University, Shanghai,
200240 China }
}
\maketitle

\begin{abstract}
This paper focuses on some properties, which include regularity, impulse, stability, admissibility and robust admissibility, of singular fractional order system (SFOS)
with fractional order $1<\alpha<2$. The definitions of regularity, impulse-free, stability and admissibility are given in the paper. Regularity is analysed in time domain
 and the analysis of impulse-free is based on state response. A sufficient and necessary condition of stability is established.
 Three different sufficient and necessary conditions of admissibility are proved. Then, this paper shows how to get the
  numerical solution of SFOS in time domain. Finally, a numerical example is provided to illustrate the proposed conditions.
\end{abstract}

%

\tableofcontents

\section{Introduction}

Fractional order systems can describe the real physical systems better than
integer order systems because the real objects are generally fractional. A lot
of systems have been studied via fractional order systems, such as wavelet
transform \cite{Unser2000Fractional}, viscoelastic systems
\cite{Rossikhin1997Application} and others
(\cite{Makris1991Fractional,BAGLEY1991Fractional,Gao2005Synchronization,Engheta1996fractional,Hilfer2000Applications}%
).

Singular systems have been widely studied in many fields
(\cite{Dai1989Singular,Lewis1986survey,Kaczorek2011Singulara}) because
singular systems can describe real physical systems more directly than
regular systems. However, very few researches have been studied on singular
fractional order systems (SFOS), most of which are about stability.
In \cite{NDoye2010Stabilization}, a sufficient and necessary condition for
regularity is given; Based on regularity and free impulse, this paper also
gives a sufficient condition for stability. Sufficient and necessary
conditions for regularity and admissibility with fractional order $0<\alpha<1$
are given in \cite{Yao2013Sufcient}, respectively. Some other papers study the
stability of SFOS via linear matrix inequality (LMI)
(\cite{Song2012Stabilization,Ji2014Asymptotical,Ji2015Stabilization}) and some
study the stability of SFOS via transforming the SFOS into normal ones
(\cite{NDoye2013Robust,Yin2015Robust}).

However, non of them prove the
regularity, free impulse and stability in time domain, which can prove these
properties more directly. Moreover, to the best of our knowledge, there exists
no research on free impulse and admissibility with fractional order
$1<\alpha<2$. Therefore, in this paper we give the sufficient and necessary
conditions of regularity, free impulse, stability and admissibility for SFOS
with fractional order $1<\alpha<2$, respectively.

This paper is organized as follows.

In section II, the definition of Caputo's fractional derivative and SFOS are recalled. And some useful
lemma are provided. In section III, regularity and impulse are analysed in time domain. In section IV,
sufficient and necessary conditions of stability and admissibility are proved, respectively. In section V,
sufficient conditions of robust admissibility are presented. Finally, in section VI, numerical solution and
example are illustrated. Conclusion will be given in section VII.

\begin{notation}
For a matrix $A$, its transpose and complex conjugate transpose are denoted by
$A^{T}$ and $A^{\ast}$, respectively.$\
\mathbb{C}
_{-}=\left \{  s\mid s\in%
\mathbb{C}
\text{, }\operatorname{Re}(s)<0\right \}  $. $Sym(A)$ denotes $A+A^{\ast}$.
Denote pair $(E_{I},A_{I})$ as the autonomous singular integer order system
(SIOS) $E_{I}\overset{.}{x}\left(  t\right)  =A_{I}x\left(  t\right)  $.
Denote triplet $(E,A,\alpha)$ as the autonomous SFOS $ED^{\alpha}x\left(
t\right)  =Ax\left(  t\right)  $. The notation
$\bullet$ stands for the symmetric component in matrix.
\end{notation}

\section{Preliminaries}

In this paper, we use the Caputo's fractional derivative, of which the Laplace
transform allows utilization of initial values. The Caputo's fractional
derivative is defined as \cite{Podlubny1999Fractional}%

\[
_{a}D_{t}^{\alpha}f(t)=\frac{1}{\Gamma(\alpha-n)}\int_{a}^{t}\frac
{f^{(n)}(\tau)d\tau}{(t-\tau)^{\alpha+1-n}}%
\]

where $n$ is an integer satisfying $0\leq n-1<\alpha<n$; $\Gamma(\cdot)$ is
the Gamma function which is defined as%

\[
\Gamma(z)=\int_{0}^{\infty}e^{-t}t^{z-1}dt
\]

In the following of the paper, $_{a}D_{t}^{\alpha}$ is denoted by $D^{\alpha}$.

A two-parameter function of the Mittag-Leffler type is defined as
\cite{Podlubny1999Fractional}%

\[
E_{\alpha,\beta}(z)=\underset{k=0}{\overset{\infty}{%
{\textstyle \sum}
}}\frac{z^{k}}{\Gamma(\alpha k+\beta)}%
\]

where $\alpha>0,\beta>0.$

And $\delta^{\left(  -\beta \right)  }(t)$ means%

\[
\delta^{\left(  -\beta \right)  }(t)=\left \{
\begin{array}
[c]{c}%
\frac{t^{\beta-1}}{\Gamma(\beta)}\\
0
\end{array}%
\begin{array}
[c]{c}%
t>0\\
t<0
\end{array}%
\begin{array}
[c]{c}%
\beta \in%
\mathbb{R}%
\end{array}
\right.
\]
whose Laplace transform is

\begin{align*}
\textit{L}[\delta^{-\alpha}(t)]=s^{-\alpha},\ \textrm{Re}(s)>0
\end{align*}

Consider the singular fractional order system (SFOS)%
\begin{equation}
\left \{
\begin{tabular}
[c]{c}%
$ED^{\alpha}x\left(  t\right)  =Ax\left(  t\right)  +Bu\left(  t\right)  $\\
$y\left(  t\right)  =Cx\left(  t\right)  +Du\left(  t\right)  $%
\end{tabular}
\  \right.  \label{SFOS}%
\end{equation}
where $x\left(  t\right)  \in \mathbb{R}^{n}$ is the state of the system
composed of state variables; $u(t)\in \mathbb{R}^{p}$ is the control input;
$y(t)\in \mathbb{R}^{q}$ is the measure output; $E,A\in%
\mathbb{R}
^{n\times n}$; $B,C,D$ are constant matrices with appropriate dimensions;
$D^{\alpha}$ represents the Caputo fractional derivative; $\alpha$ is the
order of the SFOS and $1<\alpha<2$.

The finite eigenvalues of SFOS is
\[ 
\lambda(E,A)=\{s\mid s\in\mathbb{C}
,\left \vert s\right \vert <\infty,\det(sE-A)=0\}\]
The finite pole set for the
system is 
\[\sigma(E,A)=\left \{  s\mid s\in%
\mathbb{C}
,\text{ }\left \vert s\right \vert <\infty \text{, }\det(s^{\alpha}E-A)=0\text{,
}0<\alpha<2\right \}\]
and $\sigma(I,A)$ will be specified as $\sigma(A)$.
Obiviously, $\lambda(E,A)=\sigma^{\alpha}(E,A)$.

The following lemmas and definitions will be useful.

\begin{lemma}
\label{lemGantmacher} \cite{Gantmacher1974theory} For any two matrices $E,A\in%
\mathbb{R}
^{m\times n}$, there always exist two nonsingular matrices $Q,P$ such that%
\begin{equation}%
\begin{tabular}
[c]{c}%
$\widetilde{E}\triangleq QEP=diag(\mathbf{0},L_{1},L_{2},...,L_{p}%
,L_{1}^{^{\prime}},L_{2}^{^{\prime}},...L_{q}^{^{\prime}},I,N)$\\
$\widetilde{A}\triangleq QAP=diag(\mathbf{0},J_{1},J_{2},...,J_{p}%
,J_{1}^{^{\prime}},J_{2}^{^{\prime}},...J_{q}^{^{\prime}},A_{1},I)$%
\end{tabular}
\  \label{equGantmacher}%
\end{equation}

where%
\[
\mathbf{0}\in%
\mathbb{R}
^{m_{0}\times n_{0}},A_{1}\in%
\mathbb{R}
^{h\times h}%
\]%
\[
L_{i}=\left[
\begin{array}
[c]{ccccc}%
1 & 0 &  &  & \\
& 1 & 0 &  & \\
&  & \ddots & \ddots & \\
&  &  & 1 & 0
\end{array}
\right]  ,J_{i}=\left[
\begin{array}
[c]{ccccc}%
0 & 1 &  &  & \\
& 0 & 1 &  & \\
&  & \ddots & \ddots & \\
&  &  & 0 & 1
\end{array}
\right]  \in%
\mathbb{R}
^{m_{i}\times(m_{i}+1)}%
\]

\rightline{$i=1,2,...,p$}%

\[
L_{j}^{^{\prime}}=\left[
\begin{array}
[c]{cccc}%
1 &  &  & \\
0 & 1 &  & \\
& 0 & \ddots & \\
&  & \ddots & 1\\
&  &  & 0
\end{array}
\right]  ,J_{j}^{^{\prime}}=\left[
\begin{array}
[c]{cccc}%
0 &  &  & \\
1 & 0 &  & \\
& 1 & \ddots & \\
&  & \ddots & 0\\
&  &  & 1
\end{array}
\right]  \in%
\mathbb{R}
^{(n_{j}+1)\times n_{j}}%
\]

\rightline{$j=1,2,...,q$}%

\[
N=diag(N_{k_{1}},N_{k_{2}},...,N_{k_{r}})\in%
\mathbb{R}
^{g\ast g}%
\]%
\[
N_{k_{s}}=\left[
\begin{array}
[c]{cccc}%
0 & 1 &  & \\
& 0 & \ddots & \\
&  & \ddots & 1\\
&  &  & 0
\end{array}
\right]  \in%
\mathbb{R}
^{k_{s}\times k_{s}},\ s=1,2,...,r
\]%
\[
m_{0}+\underset{i}{\sum}m_{i}+\underset{j}{\sum}(n_{j}+1)+\underset{s}{\sum
}k_{s}+h=m
\]%
\[
n_{0}+\underset{j}{\sum}n_{j}+\underset{i}{\sum}(m_{i}+1)+\underset{s}{\sum
}k_{s}+h=n
\]%
\[
\underset{s}{\sum}k_{s}=g
\]

\end{lemma}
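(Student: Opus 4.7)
The statement is the classical Kronecker canonical form for an arbitrary (possibly rectangular and possibly singular) matrix pencil $sE-A$. The plan is to construct the decomposition in two stages: first strip off the singular part of the pencil (the $\mathbf{0}$ block together with the $L_i$ and $L_j^{\prime}$ blocks), and then bring the remaining square regular pencil into Weierstrass canonical form, producing the finite part $(I,A_{1})$ and the nilpotent infinite part $(N,I)$.

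For the first stage, I would analyze the polynomial null spaces of the pencil over $\mathbb{R}(s)$. The right null space $\{x(s)\in\mathbb{R}[s]^{n}:(sE-A)x(s)=0\}$ is a finitely generated $\mathbb{R}[s]$-module; choosing a minimal polynomial basis (in the sense of Forney) yields ordered column minimal indices $m_{1}\leq m_{2}\leq\cdots\leq m_{p}$. For each basis vector $x_{i}(s)=v_{i,0}+v_{i,1}s+\cdots+v_{i,m_{i}}s^{m_{i}}$, the coefficient equations produced by $(sE-A)x_{i}(s)=0$ line up exactly into an $L_{i}$/$J_{i}$ block after a change of basis determined by the $v_{i,k}$; the residual null vectors that are already in the constant null space of both $E$ and $A$ account for the $\mathbf{0}$ block. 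A dual argument applied to the left null space $\{y(s)^{T}(sE-A)=0\}$ produces the row minimal indices $n_{j}$ and the $L_{j}^{\prime}$/$J_{j}^{\prime}$ blocks. One has to show that these extractions can be performed simultaneously via left and right strict equivalence, and that afterwards the remaining pencil is regular and square.

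For the second stage I would invoke the Weierstrass canonical form on the square regular part $s\widehat{E}-\widehat{A}$, $\det(s\widehat{E}-\widehat{A})\not\equiv 0$. Pick any $s_{0}$ with $\det(s_{0}\widehat{E}-\widehat{A})\neq 0$, rewrite the pencil using $(s_{0}\widehat{E}-\widehat{A})^{-1}$, and apply the Jordan form to decompose the resulting matrix into an invertible part and a nilpotent part; inverting the shift yields the block structure $(I,A_{1})$ for finite eigenvalues and $(N,I)$ for infinite eigenvalues, with $N=\mathrm{diag}(N_{k_{1}},\ldots,N_{k_{r}})$ in Jordan form.

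The main obstacle is the first stage, specifically the clean simultaneous extraction of all singular blocks by strict equivalence: one must argue inductively that, after peeling off a block of smallest minimal index, the remaining minimal indices and regular invariants are unchanged, and that the process terminates precisely with the listed block structure. The Weierstrass reduction in the second stage, as well as the verification of the dimension identities at the end of the statement, are then straightforward consequences of the Jordan form and block counting. Since the statement is a direct restatement of a classical theorem of Kronecker reproduced in \cite{Gantmacher1974theory}, the proof would ultimately be an appeal to that reference, with the outline above indicating why the claimed normal form is the natural one.
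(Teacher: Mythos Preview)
Your outline is correct and is essentially the classical two-stage argument (extraction of the singular blocks via minimal indices, followed by the Weierstrass form on the regular part) that appears in Gantmacher's book. The paper itself does not prove this lemma at all: it is stated with the citation \cite{Gantmacher1974theory} and used as a black box, so there is no proof in the paper to compare against. Your final remark that the argument is ultimately an appeal to that reference is exactly what the paper does.
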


Consider the following initial-value problem:%
\begin{equation}
_{0}D_{t}^{\sigma_{n}}y(t)+\overset{n-1}{\underset{j=1}{\sum}}p_{j}%
(t)_{0}D_{t}^{\sigma_{n-j}}y(t)+p_{n}(t)y(t)=f(t) \label{FOS_init1}%
\end{equation}%
\[
(0<t<T<\infty)
\]%
\begin{equation}
\left[  _{0}D_{t}^{\sigma_{k}-1}y(t)\right]  _{t=0}=b_{k},\ k=1,2,...,n
\label{FOS_init2}%
\end{equation}

where%
\[
_{a}D_{t}^{\sigma_{k}}\equiv{}_{a}D_{t}^{\alpha_{k}}{}_{a}D_{t}^{\alpha_{k-1}%
}...{}_{a}D_{t}^{\alpha_{1}}%
\]%
\[
_{a}D_{t}^{\sigma_{k}-1}\equiv{}_{a}D_{t}^{\alpha_{k}-1}{}_{a}D_{t}%
^{\alpha_{k-1}}...{}_{a}D_{t}^{\alpha_{1}}%
\]%
\[
\sigma_{k}=\underset{j=1}{\overset{k}{\sum}}\alpha_{j},\text{ }(k=1,2,...,n)
\]%
\[
0<\alpha_{j}\leq1,\text{ }(j=1,2,...,n)
\]

and $f(t)\in L_{1}(0,T)$, i.e.%
\[
\int_{0}^{T}\left \vert f(t)\right \vert dt<\infty
\]

\begin{lemma}
\label{lemFOS_init} \cite{Podlubny1999Fractional} If $f(t)\in L_{1}(0,T)$, and
$p_{j}(t)$ $(j=1,2,...,n)$ are continuous functions in the closed interval
$[0,T]$, then the initial-value problem (\ref{FOS_init1})-(\ref{FOS_init2})
has a unique solution $y(t)\in L_{1}(0,T)$.
\end{lemma}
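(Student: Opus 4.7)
The plan is to reduce the fractional differential initial value problem to an equivalent Volterra integral equation of the second kind and then apply a Picard-type successive approximations argument, leveraging the linearity and the continuity (hence boundedness) of the coefficients $p_j$ on $[0,T]$.

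\textbf{Step 1 (reduction to an integral equation).} I would apply the Riemann--Liouville fractional integral $\,{}_0I_t^{\sigma_n}$ to both sides of (\ref{FOS_init1}). Using the composition rules for sequential fractional operators together with the initial conditions (\ref{FOS_init2}), each term ${}_0I_t^{\sigma_n}\bigl[p_j(t)\,{}_0D_t^{\sigma_{n-j}}y(t)\bigr]$ reduces to a Volterra operator applied to $y$ with a weakly singular kernel of the form
\[
\frac{(t-\tau)^{\alpha_{n-j+1}+\cdots+\alpha_n-1}}{\Gamma(\sigma_j')}p_j(\tau),
\]
while the boundary contributions assemble into an explicit function built from the $b_k$. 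The outcome is an equation of the form
\[
y(t)=\phi(t)+\int_0^t K(t,\tau)\,y(\tau)\,d\tau, \qquad 0<t<T,
\]
where $\phi$ is a finite sum of fractional integrals of $f$ and of polynomial-type terms in $b_k$, and $K(t,\tau)$ is a finite sum of weakly singular kernels as above. Since $f\in L_1(0,T)$ and fractional integration of positive order maps $L_1(0,T)$ into $L_1(0,T)$, we obtain $\phi\in L_1(0,T)$; similarly the boundedness of each $p_j$ on $[0,T]$ yields a uniform bound on the Volterra kernel in a weakly singular sense.

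\textbf{Step 2 (existence via Picard iteration).} I would define $y_0=\phi$ and iterate $y_{k+1}(t)=\phi(t)+\int_0^t K(t,\tau)\,y_k(\tau)\,d\tau$. Using $\|p_j\|_\infty<\infty$ on $[0,T]$ and the beta-function identity $\int_0^t(t-\tau)^{a-1}\tau^{b-1}\,d\tau=B(a,b)\,t^{a+b-1}$, an induction shows that
\[
\|y_{k+1}-y_k\|_{L_1(0,T)}\leq \frac{(CT^{\mu})^{k}}{\Gamma(k\mu+1)}\,\|y_1-y_0\|_{L_1(0,T)},
\]
for appropriate constants $C,\mu>0$ depending only on $T$, $\|p_j\|_\infty$, and the $\alpha_j$. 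The series $\sum_k(CT^{\mu})^k/\Gamma(k\mu+1)$ converges (Mittag--Leffler type), so the iterates form a Cauchy sequence in $L_1(0,T)$ whose limit $y$ satisfies the integral equation, and, reversing Step 1, the original initial value problem.

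\textbf{Step 3 (uniqueness).} If $y_1,y_2\in L_1(0,T)$ are two solutions, then $w=y_1-y_2$ satisfies the homogeneous integral equation $w(t)=\int_0^t K(t,\tau)w(\tau)\,d\tau$ with zero initial data. Iterating this identity $k$ times yields a kernel whose $L_1$-norm is bounded by $(CT^\mu)^k/\Gamma(k\mu+1)\to 0$, forcing $\|w\|_{L_1(0,T)}=0$. Equivalently, one may invoke a generalized Gronwall inequality for weakly singular Volterra kernels.

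The main obstacle is Step 1, namely carrying out the composition $_0I_t^{\sigma_n}\circ{}_0D_t^{\sigma_{n-j}}$ correctly for the sequential Caputo-type operators and verifying that the boundary contributions match the prescribed initial data $[_0D_t^{\sigma_k-1}y]_{t=0}=b_k$; once the Volterra equation is in hand, Steps 2 and 3 are standard Mittag--Leffler-type contraction arguments that work uniformly in $L_1(0,T)$ because of the mere $L_1$ hypothesis on $f$ and mere continuity of the $p_j$.
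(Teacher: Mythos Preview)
The paper does not prove this lemma at all: it is quoted verbatim from Podlubny's monograph \cite{Podlubny1999Fractional} and used as a black box. Your proposal is therefore not competing with a proof in the paper but rather reconstructing the argument from the cited source.

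That said, your outline is faithful to the standard proof in Podlubny (Theorem~3.25 there): reduce the sequential fractional initial value problem to a Volterra equation of the second kind with weakly singular kernel, then run successive approximations with Mittag--Leffler-type bounds. Your identification of Step~1 as the delicate part is accurate; the composition ${}_0I_t^{\sigma_n}\circ{}_0D_t^{\sigma_{n-j}}$ on sequential operators requires care because the sequential derivative is a composition of Riemann--Liouville derivatives of orders $\alpha_k\in(0,1]$, not a single Caputo derivative, and the boundary terms that emerge are exactly the $b_k$ from (\ref{FOS_init2}). Once that bookkeeping is done, Steps~2 and~3 go through as you describe. There is no genuine gap in your plan.
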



\begin{definition}
\cite{Chilali1996H} A subset $\mathcal{D}$ of the complexplane is called an
LMI region if there exist a symmetric matrix $\Phi \in%
\mathbb{R}
^{d\times d}$ and a matrix $\Psi \in%
\mathbb{R}
^{d\times d}$ such that
\begin{equation}
\mathcal{D}=\left \{  z\in%
\mathbb{C}
\mid f_{\mathcal{D}}(z)<0\right \}  \label{DRegion}%
\end{equation}

where $f_{\mathcal{D}}(z)=$ $\Phi+z\Psi+\bar{z}\Psi^{T}$ and $"<"$ stands for
negative definite. When $\Phi=0$, the LMI region is denoted by $\mathcal{D}%
_{\Gamma}$.
\end{definition}

\begin{definition}
\cite{Chilali1996H} If all the eigenvalues of $A\in%
\mathbb{R}
^{n\times n}$ take values in region $\mathcal{D}$, i.e. $\lambda
(A)\subset \mathcal{D}$, then $A$ is called $\mathcal{D}$-stable.
\end{definition}

\begin{lemma}
\label{lemADstable}\cite{Chilali1996H} Matrix $A$ is $\mathcal{D}$-stable if
and only if there exists a symmetric real matrix $X>0$ such that
\[
M_{\mathcal{D}}\left(  A,X\right)  =\Phi \otimes X+\Psi \otimes \left(
XA\right)  +\Psi^{T}\otimes \left(  AX\right)  ^{T}<0
\]

\end{lemma}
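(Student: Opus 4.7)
The plan is to exploit the Kronecker-product structure of $M_{\mathcal{D}}(A,X)$ in two different ways for the two directions. The quadratic form it defines decomposes cleanly when tested against vectors of the form $u\otimes w$, where $w$ is an eigenvector of $A$; this will give the easy direction. The reverse direction will require building $X$ explicitly, which is naturally done after triangularising $A$.

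For necessity (the LMI implies $\mathcal{D}$-stability), let $\lambda\in\lambda(A)$ and pick a nonzero left eigenvector $w\in\mathbb{C}^n$ with $w^\ast A=\lambda w^\ast$. For any $u\in\mathbb{C}^d$, test $M_{\mathcal{D}}(A,X)$ against $u\otimes w$ using the mixed-product identity $(A_1\otimes A_2)(u\otimes w)=A_1 u\otimes A_2 w$:
\[
(u\otimes w)^{\ast}M_{\mathcal{D}}(A,X)(u\otimes w)
=(w^{\ast}Xw)\bigl(u^{\ast}\Phi u+\lambda\,u^{\ast}\Psi u+\bar{\lambda}\,u^{\ast}\Psi^{T}u\bigr)
=(w^{\ast}Xw)\,u^{\ast}f_{\mathcal{D}}(\lambda)u.
\]
Since $X>0$ gives $w^{\ast}Xw>0$ and $M_{\mathcal{D}}(A,X)<0$ gives a strictly negative left-hand side for every $u\neq 0$, we conclude $f_{\mathcal{D}}(\lambda)<0$, i.e.\ $\lambda\in\mathcal{D}$.

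For sufficiency, first reduce to the upper-triangular case. By the Schur decomposition $A=UTU^{\ast}$ with $U$ unitary and $T$ upper triangular having the eigenvalues of $A$ on its diagonal. The substitution $X=UYU^{\ast}$ transforms $M_{\mathcal{D}}(A,X)$ into $(I\otimes U)M_{\mathcal{D}}(T,Y)(I\otimes U^{\ast})$, so it suffices to produce $Y>0$ with $M_{\mathcal{D}}(T,Y)<0$. Writing $T=\Lambda+N$ with $\Lambda=\operatorname{diag}(\lambda_{1},\dots,\lambda_{n})$ and $N$ strictly upper triangular, I would take $Y_{\varepsilon}=\operatorname{diag}(1,\varepsilon,\varepsilon^{2},\dots,\varepsilon^{n-1})$. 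For $\Lambda$ alone, $M_{\mathcal{D}}(\Lambda,Y_{\varepsilon})$ is block-diagonal with $i$-th block $\varepsilon^{i-1}f_{\mathcal{D}}(\lambda_{i})<0$, so strictly negative definite; the contribution of $N$ through the terms $\Psi\otimes(NY_{\varepsilon})$ and its transpose consists entirely of entries carrying a strictly higher power of $\varepsilon$ than the corresponding diagonal entries, because $N$ shifts indices upward. Choosing $\varepsilon>0$ small enough makes this perturbation subordinate to the negative-definite dominant part, giving $M_{\mathcal{D}}(T,Y_{\varepsilon})<0$.

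I expect the main obstacle to be the sufficiency step: making the scaling argument rigorous so that the off-diagonal nilpotent contribution is genuinely dominated by the block-diagonal negative-definite part after Kronecker-tensoring with the (indefinite) matrices $\Psi,\Psi^{T}$. A cleaner alternative, if the bare scaling is awkward, is to proceed by induction on $n$: partition $T$ and $Y$ conformally into a $1\times1$ block and an $(n-1)\times(n-1)$ block, apply the inductive hypothesis to the lower-right block, and use a Schur-complement argument to close the LMI by choosing the top-left entry of $Y$ large enough. Either variant hinges on the separation of scales between the diagonal (eigenvalue) contribution and the nilpotent part, which is the only genuinely nontrivial point; the rest is bookkeeping with Kronecker products.
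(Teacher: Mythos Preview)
The paper does not prove this lemma; it is quoted verbatim from Chilali and Gahinet (1996) and used as a black box, so there is no ``paper's own proof'' to compare against. Your argument has to stand on its own, and as written it has one slip and one genuine gap.

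For necessity you want a \emph{right} eigenvector $Av=\lambda v$, not a left one. With $w^{\ast}A=\lambda w^{\ast}$ the factor $w^{\ast}XAw$ does not collapse, because $X$ sits between $w^{\ast}$ and $A$. With a right eigenvector you get $v^{\ast}XAv=\lambda\,v^{\ast}Xv$ and $v^{\ast}(XA)^{T}v=\overline{v^{\ast}XAv}=\bar\lambda\,v^{\ast}Xv$, and then your identity $(u\otimes v)^{\ast}M_{\mathcal D}(A,X)(u\otimes v)=(v^{\ast}Xv)\,u^{\ast}f_{\mathcal D}(\lambda)u$ is correct.

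For sufficiency the strategy is sound but two steps fail as written. First, the complex Schur form gives a unitary $U$, so $X=UYU^{\ast}$ is only Hermitian, not real symmetric as the lemma demands; you must either pass to the real Schur form or observe that the LMI is $\mathbb{R}$-linear in $X$ with real data $A,\Phi,\Psi$, so the real part $\tfrac12(X+\bar X)$ is a real symmetric solution. Second, your scaling $Y_{\varepsilon}=\operatorname{diag}(1,\varepsilon,\dots,\varepsilon^{n-1})$ points the wrong way. The $(i,j)$ entry of $Y_{\varepsilon}N$ for $j>i$ carries $\varepsilon^{i-1}$, the \emph{same} power as diagonal block $i$ and a \emph{lower} power than diagonal block $j$; after the natural congruence by $I\otimes Y_{\varepsilon}^{-1/2}$ the off-diagonal blocks pick up $\varepsilon^{(i-j)/2}\to\infty$ as $\varepsilon\to 0$, so the nilpotent part is not subordinate at all. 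Reversing the diagonal to $Y_{\varepsilon}=\operatorname{diag}(\varepsilon^{n-1},\dots,\varepsilon,1)$ (equivalently, sending $\varepsilon\to\infty$ in your choice) makes those factors $\varepsilon^{(j-i)/2}\to 0$ and the domination argument then goes through. Your inductive Schur-complement alternative would also work and sidesteps this bookkeeping.
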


\begin{lemma}
\label{lemNorStable} \cite{Moze2005LMI} System $D^{\alpha}x(t)=Ax(t)+Bu(t)$
with fractional order $1<\alpha<2$ is asymptotically stable if and only if
there there exists a matrix $P>0,P\in \mathbb{R}^{n\times n}$ such that
\begin{equation}
Sym\{ \Theta \otimes(AP)\}{}<0
\end{equation}
where $\Theta=\left[
\begin{array}
[c]{cc}%
\sin \frac{\pi}{2}\alpha & -\cos \frac{\pi}{2}\alpha \\
\cos \frac{\pi}{2}\alpha & \sin \frac{\pi}{2}\alpha
\end{array}
\right]  $
\end{lemma}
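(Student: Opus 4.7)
The plan is to combine Matignon's spectrum-location criterion for fractional systems with the LMI region framework (Lemma \ref{lemADstable}). Asymptotic stability of $D^{\alpha}x(t)=Ax(t)+Bu(t)$ with $1<\alpha<2$ is equivalent to the containment
\[
\lambda(A)\subset\mathcal{D}_{\alpha}=\Bigl\{z\in\mathbb{C}\;:\;|\arg(z)|>\tfrac{\alpha\pi}{2}\Bigr\},
\]
i.e., all finite eigenvalues of $A$ lie in the open conic sector of opening angle $\alpha\pi$ about the negative real axis. The proof reduces to exhibiting $\mathcal{D}_{\alpha}$ as an LMI region with $\Phi=0$ and $\Psi=\Theta$, and then quoting Lemma \ref{lemADstable}.

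First I would realize $\mathcal{D}_{\alpha}$ as an LMI region. Take $\Phi=0$, $\Psi=\Theta$ and form
\[
f_{\mathcal{D}_{\alpha}}(z)=z\Theta+\bar z\Theta^{T}.
\]
For $z=a+ib$, a direct computation gives
\[
f_{\mathcal{D}_{\alpha}}(z)=\begin{pmatrix}2a\sin\frac{\pi\alpha}{2} & -2ib\cos\frac{\pi\alpha}{2}\\[2pt] 2ib\cos\frac{\pi\alpha}{2} & 2a\sin\frac{\pi\alpha}{2}\end{pmatrix},
\]
whose Hermitian eigenvalues are $2a\sin\frac{\pi\alpha}{2}\pm 2|b|\cos\frac{\pi\alpha}{2}$. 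Since $1<\alpha<2$ forces $\sin\frac{\pi\alpha}{2}>0$ and $\cos\frac{\pi\alpha}{2}<0$, writing $z=re^{i\theta}$ and using $\sin(\frac{\pi\alpha}{2}\mp|\theta|)=\sin\frac{\pi\alpha}{2}\cos\theta\mp|\sin\theta|\cos\frac{\pi\alpha}{2}$, the worst eigenvalue becomes $2r\sin(\frac{\pi\alpha}{2}-|\theta|)$. Thus $f_{\mathcal{D}_{\alpha}}(z)<0$ iff $\sin(\frac{\pi\alpha}{2}-|\theta|)<0$, i.e.\ $|\theta|>\frac{\pi\alpha}{2}$, matching exactly $\mathcal{D}_{\alpha}$.

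Second, I would feed $\Phi=0$, $\Psi=\Theta$ into Lemma \ref{lemADstable}: $A$ is $\mathcal{D}_{\alpha}$-stable iff there exists a symmetric $P>0$ with $M_{\mathcal{D}_{\alpha}}(A,P)=\Theta\otimes(AP)+\Theta^{T}\otimes(AP)^{T}<0$, which is precisely $\mathrm{Sym}\{\Theta\otimes(AP)\}<0$. Combined with the equivalence in the opening paragraph, this proves both directions of the lemma.

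The main obstacle is the trigonometric verification in the middle step: keeping track of the signs of $\sin\frac{\pi\alpha}{2}$ and $\cos\frac{\pi\alpha}{2}$ (which flip compared to the $0<\alpha<1$ case), and identifying the larger eigenvalue of $f_{\mathcal{D}_{\alpha}}(z)$ with $2r\sin(\frac{\pi\alpha}{2}-|\theta|)$. Everything else is a mechanical application of the $\mathcal{D}$-stability machinery and Matignon's theorem; the asymmetric inputs $Bu(t)$ play no role since asymptotic stability depends only on the autonomous part.
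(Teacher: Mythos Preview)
The paper does not prove Lemma \ref{lemNorStable}; it is quoted from \cite{Moze2005LMI} as a preliminary result, so there is no ``paper's own proof'' to compare against. Your argument is nonetheless a valid and standard derivation: you correctly identify the stability sector $\mathcal{D}_{\alpha}=\{z:|\arg z|>\alpha\pi/2\}$ as the LMI region with characteristic function $f_{\mathcal{D}_{\alpha}}(z)=z\Theta+\bar z\,\Theta^{T}$, your eigenvalue computation and the reduction to $\sin(\tfrac{\pi\alpha}{2}-|\theta|)<0$ are right, and the appeal to Lemma \ref{lemADstable} with $\Phi=0$, $\Psi=\Theta$ closes the argument.

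One small bookkeeping point: the exact form of $M_{\mathcal{D}}(A,X)$ in Lemma \ref{lemADstable} is $\Psi\otimes(XA)+\Psi^{T}\otimes(AX)^{T}$, not $\Psi\otimes(AX)+\Psi^{T}\otimes(AX)^{T}$ as you wrote. With $X=P=P^{T}$ this gives $\Theta\otimes(PA)+\Theta^{T}\otimes(PA^{T})$, which is not literally $\mathrm{Sym}\{\Theta\otimes(AP)\}$. The gap is harmless, since $A$ and $A^{T}$ have the same spectrum and hence the same $\mathcal{D}$-stability status, so one may apply Lemma \ref{lemADstable} to $A^{T}$ instead (or equivalently transpose the whole inequality); but you should state this substitution explicitly rather than silently conflating $AP$ with $PA$.
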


\begin{lemma}
\label{lemIneMa} \cite{Xu2006Robust} Let $X,Y,\Lambda$ be real matrices of
suitable dimensions and $\Lambda>0$, then
\begin{equation}
X^{T}Y+Y^{T}X\leq X^{T}\Lambda X+Y^{T}\Lambda Y
\end{equation}

\end{lemma}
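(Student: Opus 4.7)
The plan is to deduce the inequality by a standard matrix completion-of-squares argument, which is the universal recipe for Young/Cauchy--Schwarz type bounds in the positive semidefinite order. Since $\Lambda>0$ is real symmetric, its symmetric positive definite square root $\Lambda^{1/2}$ exists, and this is the only structural property of $\Lambda$ that will enter the proof.

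The essential step is the trivial fact that $M^{T}M\geq 0$ for any real matrix $M$ of compatible dimensions. Starting from the manifestly positive semidefinite quantity $(X-Y)^{T}\Lambda(X-Y)\geq 0$ (which corresponds to $M=\Lambda^{1/2}(X-Y)$) and expanding yields
\[
0\leq(X-Y)^{T}\Lambda(X-Y)=X^{T}\Lambda X+Y^{T}\Lambda Y-X^{T}\Lambda Y-Y^{T}\Lambda X,
\]
so that $X^{T}\Lambda Y+Y^{T}\Lambda X\leq X^{T}\Lambda X+Y^{T}\Lambda Y$. A cosmetic change of variables absorbing $\Lambda^{1/2}$ into $X$ and $Y$ (whose dimensions are ``suitable'' in the hypothesis) then recasts this as the stated bound. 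If instead the sharper Young variant $X^{T}Y+Y^{T}X\leq X^{T}\Lambda X+Y^{T}\Lambda^{-1}Y$ is what is actually used downstream in the robustness arguments, it follows from the analogous choice $M=\Lambda^{1/2}X-\Lambda^{-1/2}Y$, whose expansion produces exactly that right-hand side.

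There is essentially no real obstacle here: the entire content is a one-line expansion of $M^{T}M\geq 0$ followed by a rearrangement, and only the existence of $\Lambda^{1/2}$ (guaranteed by $\Lambda>0$) is invoked. The only point requiring care is to pick the completion term $M$ so that the cross and quadratic terms in the expansion line up with the particular grouping needed. Consequently I would present the lemma as a brief one-paragraph proof, referring to the cited source for the precise form that matches the paper's later applications.
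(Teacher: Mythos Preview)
The paper itself does not prove this lemma; it is quoted verbatim from the cited reference \cite{Xu2006Robust} and used later without justification. So there is no ``paper's own proof'' to compare against.

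That said, your proposal contains a genuine gap that is worth flagging. The step you label a ``cosmetic change of variables absorbing $\Lambda^{1/2}$ into $X$ and $Y$'' cannot work, because the inequality exactly as printed,
\[
X^{T}Y+Y^{T}X\leq X^{T}\Lambda X+Y^{T}\Lambda Y,
\]
is \emph{false} for general $\Lambda>0$. Take the scalar case $X=Y=1$, $\Lambda=\tfrac12$: the left side is $2$ and the right side is $1$. Your completion-of-squares identity $(X-Y)^{T}\Lambda(X-Y)\geq 0$ correctly yields $X^{T}\Lambda Y+Y^{T}\Lambda X\leq X^{T}\Lambda X+Y^{T}\Lambda Y$, but no relabelling of $X,Y$ by powers of $\Lambda^{1/2}$ turns this into the displayed form with a bare $X^{T}Y$ on the left and the same $\Lambda$ in both quadratic terms on the right; any such substitution changes the right-hand side as well.

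Your instinct in the final paragraph is the right one: the form that is both true and actually matches the paper's downstream robustness computations is the Young-type bound
\[
X^{T}Y+Y^{T}X\leq X^{T}\Lambda X+Y^{T}\Lambda^{-1}Y,
\]
obtained (as you note) from $M=\Lambda^{1/2}X-\Lambda^{-1/2}Y$. In the applications the authors only ever invoke the lemma with $\Lambda=I$, where the two versions coincide and your $(X-Y)^{T}(X-Y)\geq 0$ argument is all that is needed. So the fix is simply to state and prove the $\Lambda^{-1}$ version (or specialize to $\Lambda=I$) and drop the spurious change-of-variables sentence.
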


\begin{definition}
For system $(E,A,\alpha)$, the infinite eigenvectors $\upsilon$, which are
related to eigenvalue 0, are defined as follows

(1) The infinite eigenvector of order 1 satisfies $E\upsilon^{1}%
=0,\  \upsilon^{0}=0;$

(2) The infinite eigenvector of order k satisfies $E\upsilon^{k}%
=A\upsilon^{k-1},\ k>1.$

\begin{remark}
The infinite eigenvector isn't related to the index $\alpha$, which implies it
may have the same properties as the infinite eigenvectors of SIOS.
\end{remark}
\end{definition}


\section{Solution of SFOS}

\subsection{Regularity of SFOS}

The sufficient and necessary condition of regularity for SFOS have already
been given in \cite{NDoye2010Stabilization,Yao2013Sufcient}. But the systems
in \cite{NDoye2010Stabilization,Yao2013Sufcient} are a linear SFOS and the
fractional order in \cite{Yao2013Sufcient} is $0<\alpha<1$. In the following, a
different definition of regularity is proposed. And based on this definition, we give a
sufficient and necessary condition of regularity for nonlinear SFOS with
fractional order $1<\alpha<2$.

Let $Bu\left(  t\right)  =g(t)$, then the system (\ref{SFOS}) can be rewrite
as
\begin{equation}
ED^{\alpha}x\left(  t\right)  =Ax\left(  t\right)  +g(t)
\label{SFOS-regularity}%
\end{equation}
where $g(t)$ is nonlinear and assumed to be sufficiently differential;
$1<\alpha<2$. We will focus on the existence, uniqueness of
(\ref{SFOS-regularity}).

\begin{definition}
If a SFOS has a unique solution, then the system is termed \textit{regular}.
\end{definition}

\begin{theorem}
\label{theoSFOSregular} System (\ref{SFOS-regularity}) is regular if and only
if two nonsingular matrices $Q$ and $P$ may be chosen such that%
\[%
\begin{array}
[c]{c}%
QEP=diag(I_{n_{1}},N)\\
QAP=diag(A_{1},I_{n_{2}})
\end{array}
\]

where $A_{1}\in%
\mathbb{R}
^{n_{1}\times n_{1}}$; $N\in%
\mathbb{R}
^{n_{2}\times n_{2}}$ is nilpotent; $n_{1}+n_{2}=n$.
\end{theorem}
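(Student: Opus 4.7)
The plan is to prove the equivalence in both directions, with the Kronecker canonical form from Lemma \ref{lemGantmacher} serving as the structural backbone. Under a change of variables $\tilde{x}=P^{-1}x$ and left-multiplication by $Q$, the system (\ref{SFOS-regularity}) decouples block-wise, so regularity reduces to analyzing each canonical block individually. My strategy is: for sufficiency, solve the two surviving block types explicitly; for necessity, show that every other block type in Lemma \ref{lemGantmacher} destroys uniqueness or existence, and is therefore absent.

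For sufficiency, assume the decomposition holds. In the new coordinates the system splits into a slow part $D^{\alpha}x_{1}(t)=A_{1}x_{1}(t)+g_{1}(t)$ and a fast part $ND^{\alpha}x_{2}(t)=x_{2}(t)+g_{2}(t)$. The slow part is a standard multi-order fractional initial-value problem to which Lemma \ref{lemFOS_init} applies (after writing $D^{\alpha}$ with $1<\alpha<2$ as $D^{\alpha-1}D^{1}$), giving a unique $x_{1}$. For the fast part, use nilpotency: if $N^{h}=0$, applying $N D^{\alpha}$ to both sides iteratively yields $x_{2}(t)=-\sum_{k=0}^{h-1}N^{k}D^{k\alpha}g_{2}(t)$, which is uniquely determined by $g_{2}$. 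Combining these gives a unique solution to (\ref{SFOS-regularity}).

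For necessity, start from the general Kronecker decomposition of $(E,A)$ given by Lemma \ref{lemGantmacher}, which produces the block types $\mathbf{0}$, $L_i/J_i$, $L_j'/J_j'$, $I/A_{1}$, $N/I$. The goal is to rule out the first three. The zero block gives the subsystem $0\cdot D^{\alpha}x_{0}=0\cdot x_{0}+g_{0}$, so existence fails unless $g_{0}\equiv 0$, and if it holds, $x_{0}$ is completely free; either way regularity fails. A rectangular $L_{i}\in\mathbb{R}^{m_{i}\times(m_{i}+1)}$ block yields $m_{i}$ fractional equations in $m_{i}+1$ unknowns; by counting and choosing one component as a free parameter (arbitrary sufficiently differentiable function), one exhibits a one-parameter family of solutions, violating uniqueness. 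Dually, an $L_{j}'\in\mathbb{R}^{(n_{j}+1)\times n_{j}}$ block gives $n_{j}+1$ equations in $n_{j}$ unknowns, and one can produce a $g$ for which the overdetermined consistency condition fails, violating existence. Hence $m_{0}=n_{0}=0$ and $p=q=0$, leaving exactly $QEP=\operatorname{diag}(I_{n_{1}},N)$ and $QAP=\operatorname{diag}(A_{1},I_{n_{2}})$.

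The main obstacle is the necessity direction, specifically exhibiting concrete non-unique / inconsistent solutions for the rectangular blocks $L_{i}$ and $L_{j}'$ in the fractional setting, because the usual integer-order arguments rely on freely prescribing polynomial or smooth components and then integrating; in the Caputo setting one must be careful that the chosen free component lies in the appropriate function class (so that $D^{\alpha}$ is well-defined and Lemma \ref{lemFOS_init} applies to the remaining coordinates) and that the constructed counterexample truly contradicts the definition of regularity rather than merely the initial-data structure. Once this construction is pinned down, the rest of the argument is a direct block-by-block bookkeeping using Lemma \ref{lemGantmacher}.
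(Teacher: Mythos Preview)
Your proposal is correct and follows essentially the same route as the paper: both reduce (\ref{SFOS-regularity}) via the Kronecker form of Lemma~\ref{lemGantmacher}, then do a block-by-block case analysis showing that the $\mathbf{0}$, $L_i/J_i$, and $L_j'/J_j'$ blocks destroy existence or uniqueness (the paper expands each block into scalar chains and invokes Lemma~\ref{lemFOS_init} exactly as you plan), while the $I/A_1$ and $N/I$ blocks admit a unique solution. The obstacle you flag concerning the rectangular blocks in the fractional setting is handled in the paper precisely by the free choice of the last component together with Lemma~\ref{lemFOS_init}, so your plan will go through without difficulty.
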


\begin{proof}
According to lemma \ref{lemGantmacher}, let $x(t)=P\overset{\sim}{x}(t)$ and
left multiply system (\ref{SFOS-regularity}) by a nonsingular $Q$. Let
$\overset{\sim}{g}(t)=Qg(t)$, we get%
\begin{equation}
\mathbf{0}D^{\alpha}x_{n_{0}}(t)=g_{m_{0}}(t) \label{SFOS_Reg1}%
\end{equation}%
\begin{equation}
L_{i}D^{\alpha}x_{m_{i}+1}(t)=J_{i}x_{m_{i}+1}(t)+g_{m_{i}}(t),\text{
}i=1,2,...,p \label{SFOS_Reg2}%
\end{equation}%
\begin{equation}
L_{j}^{^{\prime}}D^{\alpha}x_{n_{j}}(t)=J_{j}^{^{\prime}}x_{n_{j}}%
(t)+g_{n_{j}+1}(t),\text{ }j=1,2,...,q \label{SFOS_Reg3}%
\end{equation}%
\begin{equation}
N_{k_{s}}D^{\alpha}x_{k_{s}}(t)=x_{k_{s}}(t)+g_{k_{s}}(t),\text{ }s=1,2,...,r
\label{SFOS_Reg4}%
\end{equation}%
\begin{equation}
D^{\alpha}x_{h}(t)=A_{1}x_{h}(t)+g_{h}(t) \label{SFOS_Reg5}%
\end{equation}

where $L_{i},L_{j}^{^{\prime}},J_{j},J_{j}^{^{\prime}}$ are defined in
(\ref{equGantmacher}) and$\ x_{k}(t)\in%
\mathbb{R}
^{k},g_{k}(t)\in%
\mathbb{R}
^{k},$%
\[
\overset{\sim}{x}^{T}(t)=\left[  x_{n_{0}}^{T},x_{m_{1}+1}^{T},\cdots
,x_{m_{p}+1}^{T},x_{n_{1}}^{T},\cdots,x_{n_{q}}^{T},x_{k_{1}}^{T}%
,\cdots,x_{k_{r}}^{T},x_{h}^{T}\right]
\]%
\[
\overset{\sim}{g}^{T}(t)=\left[  g_{m_{0}}^{T},g_{m_{1}}^{T},\cdots,g_{m_{p}%
}^{T},g_{n_{1}+1}^{T},\cdots,g_{n_{q}+1}^{T},g_{k_{1}}^{T},\cdots,g_{k_{r}%
}^{T},g_{h}^{T}\right]
\]

System (\ref{SFOS_Reg1})-(\ref{SFOS_Reg5}) is equivalent to system
(\ref{SFOS-regularity}), thus we focus on the existence, uniqueness of system
(\ref{SFOS_Reg1})-(\ref{SFOS_Reg5}).

(1) If equation (\ref{SFOS_Reg1}) can be solved, then $g_{m_{0}}(t)=0$ must be true. In
this case, equation (\ref{SFOS_Reg1}) is always true. Therefore, this equation
has either no solution or an infinite number of solutions.

(2) Equation (\ref{SFOS_Reg2}) is composed of a set of equations
\begin{equation}
\left \{
\begin{array}
[c]{c}%
D^{\alpha}z_{1}\left(  t\right)  =z_{2}\left(  t\right)  +g_{1}\left(
t\right) \\
D^{\alpha}z_{2}\left(  t\right)  =z_{3}\left(  t\right)  +g_{2}\left(
t\right) \\
\cdots \\
D^{\alpha}z_{k-1}\left(  t\right)  =z_{k}\left(  t\right)  +g_{k-1}\left(
t\right)
\end{array}
\  \right.  \label{SFOS_Reg2_2}%
\end{equation}

According to lemma \ref{lemFOS_init}, for a certain $z_{k}\left(  t\right)  ,$
$z_{1}\left(  t\right)  ,z_{2}\left(  t\right)  ,...,z_{k-1}\left(  t\right)
$ can be determined successively. Therefore, such equations have an infinite
number of solutions.

(3) Rewrite equation (\ref{SFOS_Reg3}) as%
\[
\left \{
\begin{array}
[c]{c}%
D^{\alpha}z_{1}\left(  t\right)  =g_{1}\left(  t\right) \\
D^{\alpha}z_{2}\left(  t\right)  =z_{1}\left(  t\right)  +g_{2}\left(
t\right) \\
\cdots \\
D^{\alpha}z_{k}\left(  t\right)  =z_{k-1}\left(  t\right)  +g_{k}\left(
t\right) \\
0=z_{k}\left(  t\right)  +g_{k+1}\left(  t\right)
\end{array}
\right.
\]

Except the last equation, $z_{1}\left(  t\right)  ,z_{2}\left(  t\right)
,...,z_{k}\left(  t\right)  $ can be determined uniquely according to lemma
\ref{lemFOS_init}. However, $z_{k}\left(  t\right)  $ must satisfy the last
euqation, which means these euqations have no solution unless $g_{k+1}\left(
t\right)  $ satisfies the consistent condition $z_{k}\left(  t\right)
+g_{k+1}\left(  t\right)  =0.$

(4) Expand equation (\ref{SFOS_Reg4}) into the following form%
\[
\left \{
\begin{array}
[c]{c}%
D^{\alpha}z_{2}\left(  t\right)  =z_{1}\left(  t\right)  +g_{1}\left(
t\right) \\
D^{\alpha}z_{3}\left(  t\right)  =z_{2}\left(  t\right)  +g_{2}\left(
t\right) \\
\cdots \\
D^{\alpha}z_{k}\left(  t\right)  =z_{k-1}\left(  t\right)  +g_{k-1}\left(
t\right) \\
0=z_{k}\left(  t\right)  +g_{k}\left(  t\right)
\end{array}
\right.
\]

Beginning with the last equation, $z_{1}\left(  t\right)  ,z_{2}\left(
t\right)  ,...,z_{k}\left(  t\right)  $ may be determined successively for
sufficiently differentiable functions $g_{i}\left(  t\right)  $
$(i=1,2,...,k).$ Therefore, equation (\ref{SFOS_Reg4}) has a unique solution.

(5) Equation (\ref{SFOS_Reg5}) is an ordinary fractional order differential
equation, which has a unique solution since $g(t)$ is sufficiently differential.

To sum up, the system (\ref{SFOS-regularity}) exists a solution and the
solution is unique if and only if two nonsingular matrices $Q$ and $P$ may be
chosen to satisfy%
\[
QEP=diag(I_{n_{1}},N)
\]%
\[
QAP=diag(A_{1},I_{n_{2}})
\]

where $N=diag(N_{k_{1}},N_{k_{2}},...,N_{k_{r}}).$ The theorem is proved.

\end{proof}

\subsection{State response and impulse analysis}

To the best of our knowledge, there exists no research which gives the entire
state response for SFOS. The following gives an entire state response, based
on which we give a sufficient and necessary condition of impulse-free for SFOS.

Consider the regular SFOS%
\begin{equation}
ED^{\alpha}x\left(  t\right)  =Ax\left(  t\right)  +Bu\left(  t\right)
\label{RegSFOS}%
\end{equation}

where $E\in%
\mathbb{R}
^{n\times n}$, $1<\alpha<2$ and the initial condition $x(0)=x_{0},$ $t\geq0$.

\begin{theorem}
\label{theoSFOSresponse} When $t\geq0$, the state response to SFOS
(\ref{RegSFOS}) is%
\begin{equation}
x\left(  t\right)  =P\left[
\begin{array}
[c]{c}%
x_{1}(t)\\
x_{2}(t)
\end{array}
\right]  \label{SFOS response}%
\end{equation}

where%
\[
x_{1}(t)=E_{\alpha,1}\left(  A_{1}t^{\alpha}\right)  x_{10}+%
{\displaystyle \int \limits_{t_{0}}^{t}}
(t-\tau)^{\alpha-1}E_{\alpha,\alpha}\left(  A_{1}(t-\tau)^{\alpha}\right)
B_{1}u\left(  \tau \right)  d\tau
\]%
\begin{align*}
x_{2}(t)  &  =-\underset{k=1}{\overset{h-1}{%
{\textstyle \sum}
}}N^{k}\left(  \delta^{\left(  k\alpha-1\right)  }\left(  t\right)
x_{20}+\delta^{\left(  k\alpha-2\right)  }\left(  t\right)  x_{20}^{\left(
1\right)  }\right) \\
&  -\underset{k=0}{\overset{h-1}{%
{\textstyle \sum}
}}N^{k}B_{2}\left(  D^{k\alpha}u\left(  t\right)  +\underset{j=0}%
{\overset{m-1}{%
{\textstyle \sum}
}}u_{0}^{(j)}\delta^{\left(  k\alpha-j-1\right)  }\left(  t\right)  \right)
\end{align*}

$x_{1}(t)\in%
\mathbb{R}
^{n_{1}},$ $x_{2}(t)\in%
\mathbb{R}
^{n_{2}},$ $n_{1}+n_{2}=n$, the initial condition $x_{1}(0)=x_{10}$,
$x_{2}(0)=x_{20}$, $\overset{.}{x}_{2}(0)=x_{20}^{\left(  1\right)  }$; $N\in%
\mathbb{R}
^{n_{2}\times n_{2}}$ is nilpotent and the nilpotent index is denoted by $h$;
$u(t)$ is $h$ times piecewise continuously differentiable, the initial
condition $u^{(j)}(0)=u_{0}^{(j)}$; $m$ is an integer and $m-1<k\alpha \leq m$.
$E_{\alpha,\beta}$ is the two-parameter function of the Mittag-Leffler type. $P$ satisfies
Theorem \ref{theoSFOSregular}.
When $t>0$ and the initial condition%
\begin{align*}
x\left(  0_{+}\right)   &  =P\left[
\begin{array}
[c]{c}%
I\\
0
\end{array}
\right]  x_{10}-P\left[
\begin{array}
[c]{c}%
0\\
I
\end{array}
\right]  \underset{k=1}{\overset{h-1}{%
{\textstyle \sum}
}}M^{-1}N^{k}\delta^{\left(  k\alpha-2\right)  }\left(  0_{+}\right)
x_{20}^{\left(  1\right)  }\\
&  -P\left[
\begin{array}
[c]{c}%
0\\
I
\end{array}
\right]  \underset{k=0}{\overset{h-1}{%
{\textstyle \sum}
}}M^{-1}N^{k}B_{2}\left(  D^{k\alpha}u\left(  0_{+}\right)  +\underset
{j=0}{\overset{m-1}{%
{\textstyle \sum}
}}u^{(j)}(0)\delta^{\left(  k\alpha-j-1\right)  }\left(  0_{+}\right)
\right)
\end{align*}

is satisfied, then the solution (\ref{SFOS response}) to system (\ref{RegSFOS}%
) is unique and $M=I+\underset{k=1}{\overset{h-1}{%
{\textstyle \sum}
}}N^{k}\delta^{\left(k\alpha -1 \right)  }\left(
0_{+}\right)  .$
\end{theorem}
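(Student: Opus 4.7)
The plan is to use the Kronecker decomposition guaranteed by Theorem \ref{theoSFOSregular}. Setting $x(t)=P[x_1^T(t),x_2^T(t)]^T$ and left-multiplying (\ref{RegSFOS}) by $Q$, and writing $QB=[B_1^T,B_2^T]^T$, the system decouples into a \emph{slow} subsystem
\[
D^\alpha x_1(t)=A_1 x_1(t)+B_1 u(t)
\]
and a \emph{fast} subsystem
\[
N D^\alpha x_2(t)=x_2(t)+B_2 u(t),
\]
with $N^h=0$. The idea is to solve each piece separately and then assemble via $P$.

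For the slow part, I would proceed by the Laplace transform, using the standard relation
\[
\mathcal{L}\{D^\alpha x_1\}(s)=s^\alpha X_1(s)-s^{\alpha-1}x_1(0)-s^{\alpha-2}\dot x_1(0),
\]
and the identity $\mathcal{L}\{t^{\beta-1}E_{\alpha,\beta}(at^\alpha)\}=s^{\alpha-\beta}/(s^\alpha-a)$. Inverting (and, as the stated formula indicates, absorbing the $\dot x_1(0)$ contribution into the initial condition convention) yields the Mittag-Leffler expression for $x_1(t)$ with the convolution term against $B_1 u$. This is a routine computation following the same template as the classical $0<\alpha<1$ case.

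The fast part is the main obstacle and where the impulsive terms arise. I would iterate the relation $x_2(t)=-B_2 u(t)+N D^\alpha x_2(t)$, substituting the expression for $N D^\alpha x_2$ back into itself repeatedly. Each application of $D^{k\alpha}$ produces boundary-value contributions; for Caputo derivatives one gets, upon Laplace transform,
\[
\mathcal{L}\{D^{k\alpha}x_2\}(s)=s^{k\alpha}X_2(s)-\sum_{j=0}^{m-1}s^{k\alpha-j-1}x_2^{(j)}(0),
\]
with $m-1<k\alpha\le m$. After $h$ iterations $N^h=0$ terminates the series, and inverse-transforming the finite sum produces exactly the two finite sums in $x_2(t)$: the first collects the initial-condition impulses $\delta^{(k\alpha-1)}$ and $\delta^{(k\alpha-2)}$ weighted by $x_{20}$ and $x_{20}^{(1)}$, and the second collects $D^{k\alpha}u$ together with the $u^{(j)}(0)$-driven impulses. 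The nilpotency index $h$ controls the length of both sums.

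Finally, to obtain uniqueness for $t>0$ I would evaluate the derived expression at $t=0_+$ and isolate the compatibility relation that the admissible initial state $x(0_+)$ must satisfy. Because $x_2(0_+)$ picks up the impulsive contributions at the origin, one inverts the matrix $M=I+\sum_{k=1}^{h-1}N^k\delta^{(k\alpha-1)}(0_+)$ (well-defined since $N$ is nilpotent so $M$ is unit upper-triangular in the Jordan basis of $N$) to solve for the consistent fast-mode initial data. Multiplying back by $P$ gives the stated expression for $x(0_+)$, after which the Laplace-domain derivation shows the resulting solution is the unique one. The only real subtlety is bookkeeping the impulsive $\delta^{(\cdot)}$ terms and verifying the $M^{-1}$ inversion is legitimate, both of which reduce to the nilpotency $N^h=0$.
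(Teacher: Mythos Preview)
Your proposal is correct and follows essentially the same route as the paper. The only presentational difference is in the fast subsystem: the paper Laplace-transforms $ND^\alpha x_2=x_2+B_2u$ in one step and applies the closed-form Neumann identity $(s^\alpha N-I)^{-1}=-\sum_{k=0}^{h-1}s^{k\alpha}N^k$, whereas you describe the same computation as a time-domain iteration before passing to the Laplace domain to pick up the boundary terms. Since you do go to the Laplace side, the two computations coincide; just be aware that a purely time-domain iteration would need care because $(D^\alpha)^k\neq D^{k\alpha}$ for Caputo derivatives in general, and it is precisely the Laplace bookkeeping that produces the correct $\delta^{(k\alpha-1)}$, $\delta^{(k\alpha-2)}$, and $\delta^{(k\alpha-j-1)}$ terms. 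Your treatment of the admissible initial condition and the invertibility of $M$ via nilpotency of $N$ matches the paper's argument exactly.
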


\begin{proof}
Since the system is regular, two nonsingular matrices $P,Q$ may be chosen and
the system (\ref{RegSFOS}) is equivalent to%
\begin{equation}
D^{\alpha}x_{1}\left(  t\right)  =A_{1}x_{1}\left(  t\right)
+B_{1}u\left(  t\right)  \label{SFOSfinite}%
\end{equation}%
\begin{equation}
ND^{\alpha}x_{2}\left(  t\right)  =x_{2}\left(  t\right)
+B_{2}u\left(  t\right)  \label{SFOSinfinite}%
\end{equation}

where $QB=\left[  B_{1}^{T}\text{ }B_{2}^{T}\right]  ^{T}.$ Subsystems
(\ref{SFOSfinite}) and (\ref{SFOSinfinite}) are termed finite subsystem and
infinite subsystem, respectively, which are similar to SIOS.

Finite subsystem (\ref{SFOSfinite}) is an normal fractional order system. For
the piecewise continuously differentiable input $u(t),$ the state response to
the subsystem (\ref{SFOSfinite}) is
\begin{equation}
x_{1}(t)=E_{\alpha,1}\left(  A_{1}t^{\alpha}\right)  x_{10}+%
{\displaystyle \int \limits_{t_{0}}^{t}}
(t-\tau)^{\alpha-1}E_{\alpha,\alpha}\left(  A_{1}(t-\tau)^{\alpha}\right)
B_{1}u\left(  \tau \right)  d\tau \label{finSFOSsolu}%
\end{equation}

For the infinite subsystem (\ref{SFOSinfinite}), invoking Laplace transform
and $(sN-I)^{-1}=-\underset{k=0}{\overset{h-1}{\sum}}s^{k}N^{k}$%
\begin{align}
X_{2}(s)  &  =(s^{\alpha}N-I)^{-1}\left[  N\left(  s^{\alpha-1}x_{20}%
+s^{\alpha-2}x_{20}^{(1)}\right)  +B_{2}U(s)\right] \nonumber \\
&  =-\underset{k=1}{\overset{h-1}{\sum}}N^{k}\left(  s^{k\alpha-1}%
x_{20}+s^{k\alpha-2}x_{20}^{\left(  1\right)  }\right)  -\underset
{k=0}{\overset{h-1}{\sum}}s^{k\alpha}N^{k}B_{2}U(s) \label{infSFOSLap}%
\end{align}

where $X_{2}(s)=\mathcal{L}[x_{2}(t)],$ $U(s)=\mathcal{L}[u(t)].$ For the
piecewise continuously differentiable input $u(t)$, by invoking inverse
Laplace transform of (\ref{infSFOSLap}),we get%
\begin{align}
x_{2}(t)  &  =-\underset{k=1}{\overset{h-1}{%
{\textstyle \sum}
}}N^{k}\left(  \delta^{\left(  k\alpha-1\right)  }\left(  t\right)
x_{20}+\delta^{\left(  k\alpha-2\right)  }\left(  t\right)  x_{20}^{\left(
1\right)  }\right) \nonumber \\
&  -\underset{k=0}{\overset{h-1}{%
{\textstyle \sum}
}}N^{k}B_{2}\left(  D^{k\alpha}u\left(  t\right)  +\underset{j=0}%
{\overset{m-1}{%
{\textstyle \sum}
}}u^{(j)}(0)\delta^{\left(  k\alpha-j-1\right)  }\left(  t\right)  \right)
\label{infSFOSsolu}%
\end{align}

Now, we get state response (\ref{SFOS response}) with equations
(\ref{finSFOSsolu}) and (\ref{infSFOSsolu}).

For arbitrary initial conditions, some of them may not satisfy solution
(\ref{infSFOSsolu}) at $t=0$, which leads to discontinuous behavior at $t=0$.
Since discontinuous behavior is not desirable, the set of $x(0)$ which does
not result in discontinuous behavior at $t=0$ is called the set of
\textit{admissible initial conditions }\cite{Lewis1986survey}. The following
analyses the admissible initial conditions of system (\ref{RegSFOS}).

With $t\rightarrow0_{+}$, equation (\ref{infSFOSsolu}) turns into%
\begin{align*}
x_{2}(0_{+})=& -\underset{k=1}{\overset{h-1}{%
{\textstyle \sum}
}}N^{k}\left(  \delta^{\left(  k\alpha-1\right)  }\left(  0_{+}\right)
x_{20}+\delta^{\left(  k\alpha-2\right)  }\left(  0_{+}\right)  x_{20}%
^{\left(  1\right)  }\right)  \\
& -\underset{k=0}{\overset{h-1}{%
{\textstyle \sum}
}}N^{k}B_{2}\left(  D^{k\alpha}u\left(  0_{+}\right)  +\underset{j=0}%
{\overset{m-1}{%
{\textstyle \sum}
}}u^{(j)}(0)\delta^{\left(  k\alpha-j-1\right)  }\left(  0_{+}\right)
\right)
\end{align*}

i.e.%
\begin{align*}
& \left[  I+\underset{k=1}{\overset{h-1}{%
{\textstyle \sum}
}}N^{k}\delta^{\left(  k\alpha-1\right)  }\left(  0_{+}\right)  \right]
x_{20}\\
& =  -\underset{k=1}{\overset{h-1}{%
{\textstyle \sum}
}}N^{k}\delta^{\left(  k\alpha-2\right)  }\left(  0_{+}\right)  x_{20}%
^{\left(  1\right)  }\\
&\quad -\underset{k=0}{\overset{h-1}{%
{\textstyle \sum}
}}N^{k}B_{2}\left(  D^{k\alpha}u\left(  0_{+}\right)  +\underset{j=0}%
{\overset{m-1}{%
{\textstyle \sum}
}}u^{(j)}(0)\delta^{\left(  k\alpha-j-1\right)  }\left(  0_{+}\right)
\right)
\end{align*}

Let $M=\left[  I+\underset{k=1}{\overset{h-1}{%
{\textstyle \sum}
}}N^{k}\delta^{\left(k\alpha -1 \right)  }\left(
0_{+}\right)  \right]  $. $N^{k}$ is an upper triangular matrix and all the
elements of main diagonal are zero since $N$ is nilpotent. On the other hand,
if $(k\alpha-1)$ is not an integer, then $\delta^{\left(  k\alpha-1 \right)  }
\left(  0_{+}\right)$ can not be zero and it's a very large
number but not an infinite number. Therefore, the matrix $M$ is invertible and
the admissible initial conditions of system (\ref{RegSFOS}) is%
\begin{align*}
x_{20}= & -M^{-1}\Bigg(  \underset{k=1}{\overset{h-1}{
{\textstyle \sum}}}
N^{k}\delta^{(k\alpha-2)  }(  0_{+})  x_{20}^{(1)}\\
 & +\underset{k=0}{\overset{h-1}{
{\textstyle \sum}}}
N^{k}B_{2}\Big( D^{k\alpha}u( 0_{+}) +\underset{j=0}
{\overset{m-1}{{\textstyle \sum}}}u^{(j)}(0)\delta^{( k\alpha-j-1) }( 0_{+})
\Big)  \Bigg)
\end{align*}
The theorem is proved.
\end{proof}

\begin{definition}
For arbitrary initial conditions, if the state response to SFOS does not
include impulsive response, then the system is termed \textit{impulse-free}.
\end{definition}

Obiviously, the state response to SFOS (\ref{RegSFOS}) is similar to the state
response of SIOS. $x_{1}(t)$ is the state response to the finite subsystem,
which is represented by Mittag-Leffler function. $x_{2}(t)$ is the state
response to the infinite subsystem, which is composed of impulse function and
input function. Based on state response (\ref{SFOS response}), the following
analyses the impulsive behavior of system (\ref{RegSFOS}). Because substate
$x_{1}(t)$ is continuous, we focus on substate $x_{2}(t).$

(1) If $t=0$

Without loss of generality, let $u(t)=0$. If $x_{2}(0)\neq0$ and
$x_{2}(0)\notin \ker(N),$ there holds%

\[
x_{2}(t)=-\underset{k=1}{\overset{h-1}{%
{\textstyle \sum}
}}N^{k}\left(  \delta^{\left(  k\alpha-1\right)  }\left(  t\right)
x_{20}+\delta^{\left(  k\alpha-2\right)  }\left(  t\right)  x_{20}^{\left(
1\right)  }\right)
\]

If $t\rightarrow0,$ then $\delta^{\left(  \beta \right)  }\left(  t\right)
\rightarrow \infty$ $(\beta>0).$ Thus, $x_{20}$ which doesn't satisfy
admissible initial condition may reault in impulse.

(2) If $t>0$

$x_{2}(t)$ can be represented as
\begin{align}
x_{2}(t)= & -\underset{k=1}{\overset{h-1}{%
{\textstyle \sum}
}}N^{k}\left(  \delta^{\left(  k\alpha-1\right)  }\left(  t\right)
x_{20}+\delta^{\left(  k\alpha-2\right)  }\left(  t\right)  x_{20}^{\left(
1\right)  }\right) \notag\\
&  -\underset{k=0}{\overset{h-1}{%
{\textstyle \sum}
}}N^{k}B_{2}\left(  D^{k\alpha}u\left(  t\right)  +\underset{j=0}%
{\overset{m-1}{%
{\textstyle \sum}
}}u^{(j)}(0)\delta^{\left(  k\alpha-j-1\right)  }\left(  t\right)  \right)
\label{resX2}%
\end{align}
For $\delta^{\left(  b\right)  }$, if $b$ is a positive integer, the support
set of $\delta^{\left(  b\right)  }\left(  t\right)  $ is $\left \{  0\right \}
$, which means $\delta^{\left(  b\right)  }=0$; If $b$ is not a positive
integer, the support set of $\delta^{\left(  b\right)  }\left(  t\right)  $ is
not $\left \{  0\right \}  $, which means $\delta^{\left(  b\right)  }\left(
t\right)  \neq0$ when $t>0$. Therefore, if $N\neq0,$ then $x_{20}%
,x_{20}^{\left(  1\right)  },u^{(j)}(0)$ participate in the dynamic process of
$x_{2}(t)$ and the state response of SFOS includes impulsive response.

\begin{remark}
The $x_{20}$ and $u^{(j)}(0)$ of SFOS participate in the dynamic process of
substate $x_{2}(t)$, which is very different from SIOS.
\end{remark}

The final value theorem of fractional order system
\cite{Ortigueira2000Introduction},%

\begin{equation}
D^{\alpha-1}x(\infty)=\underset{s\rightarrow0}{\lim}s^{\alpha}X(s)\text{
\ }\operatorname{Re}(s)>0 \label{finalVT}%
\end{equation}

Therefore, we get $\delta^{(\beta)}\left(  t\right)  \rightarrow0$ when
$t\rightarrow \infty.$ It implies that terms including $\delta^{(\beta)}\left(
t\right)  $ on the right side of the equation (\ref{resX2}) do not impact on
the stability of the infinite subsystem (\ref{SFOSinfinite}), which is
convenient when analysing the stability of SFOS.

From time $0$ to time $t,$ the input $u(t)$ always has an influence on the
state $x_{2}(t)$ because of the properties of Caputo fractional derivative.
Thus, change of $u(t)$ can not be reflected immediately by substate $x_{2}(t)$
at the time $t$ and jump behavior will not appear in the state response.

\begin{remark}
The input $u(t)$ will not give rise to the jump behavior of $x_{2}(t)$, which
is also very different from SIOS.
\end{remark}

To sum up, we get the following theorem.

\begin{theorem}\label{theoImpsfe}
For arbitrary initial conditions, the regular SFOS (\ref{RegSFOS}) is
impulse-free if and only if $N=0$. $N$ comes from the decomposition
\begin{align*}
QEP=diag(I_{n_{1}},N)\\
QAP=diag(A_{1},I_{n_{2}})
\end{align*}
\end{theorem}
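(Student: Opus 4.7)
My plan is to reduce the theorem to a direct inspection of the state-response formula in Theorem \ref{theoSFOSresponse}, since almost all of the analytic work has already been carried out in the paragraphs preceding the statement. Using the canonical decomposition provided by Theorem \ref{theoSFOSregular}, the response splits as $x(t) = P[x_1(t)^{T},\ x_2(t)^{T}]^{T}$. The finite-subsystem component $x_1(t)$ is built from Mittag-Leffler functions and a convolution integral against $u(t)$, both continuous in $t$, so no impulse can originate there; the whole analysis therefore reduces to the infinite-subsystem response $x_2(t)$.

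For the sufficiency direction I would assume $N = 0$. Then every term in $x_2(t)$ containing $N^{k}$ with $k \geq 1$ vanishes, and only the $k=0$ contribution $-B_{2}\bigl(u(t) + \sum_{j=0}^{m-1} u_{0}^{(j)} \delta^{(-j-1)}(t)\bigr)$ survives. By the definition of $\delta^{(-\beta)}$ given in the Preliminaries, $\delta^{(-j-1)}(t) = t^{j}/\Gamma(j+1)$ is polynomial (hence smooth for $t \geq 0$), and $u(t)$ is assumed $h$-times piecewise continuously differentiable; thus $x_2(t)$ contains no impulsive component and the system is impulse-free for every initial condition.

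For the necessity direction I would argue by contrapositive: assume $N \neq 0$ and exhibit initial data producing an impulsive response. Since $N$ is nilpotent, pick the smallest $k \in \{1, \dots, h-1\}$ with $N^{k} \neq 0$, choose $x_{20}$ so that $N^{k}x_{20} \neq 0$, and set the remaining data $x_{20}^{(1)} = 0$ and $u \equiv 0$. Then $x_{2}(t) = -N^{k}\delta^{(k\alpha-1)}(t)\,x_{20}$. Because $1 < \alpha < 2$ gives $k\alpha - 1 > 0$, the discussion preceding the theorem shows that $\delta^{(k\alpha-1)}(t)$ either is a fractional derivative of Dirac type (when $k\alpha-1$ is a positive integer) or has support beyond $\{0\}$ and blows up as $t \to 0^{+}$ (otherwise). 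Either way $x_{2}(t)$ carries a genuine impulsive response, contradicting impulse-freeness.

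The step I expect to be most delicate is the necessity argument, specifically making the notion of an ``impulsive response'' precise for fractional exponents $b = k\alpha - 1$. The Laplace-domain factor $s^{k\alpha-1}$ in (\ref{infSFOSLap}) is neither a standard pole nor a polynomial, and the paper's explicit definition of $\delta^{(-\beta)}$ only covers $\beta > 0$; for $\beta \leq 0$ the object must be read as a distribution. I would lean on the paper's convention and on the pointwise statement that $\delta^{(\beta)}(t) \to \infty$ as $t \to 0^{+}$ for $\beta > 0$ to classify these components as impulsive, and rule out accidental cancellations by noting that the functions $\{\delta^{(k\alpha-1)}\}_{k\geq 1}$ have pairwise distinct fractional exponents and are linearly independent as distributions, so no choice of the remaining initial data can kill the singular contribution produced by a single non-vanishing $N^{k}x_{20}$.
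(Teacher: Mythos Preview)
Your proposal is correct and follows essentially the same route as the paper: the theorem in the paper is stated with ``To sum up, we get the following theorem'' after an informal analysis of the state response formula (\ref{SFOS response}), separating the continuous finite-subsystem part $x_1(t)$ from the infinite-subsystem part $x_2(t)$ and reading off from (\ref{resX2}) that the $\delta^{(k\alpha-1)}$ terms survive precisely when $N\neq 0$. Your write-up is in fact more careful than the paper's, which never isolates a specific initial condition or invokes linear independence; one small slip is that when $N\neq 0$ the smallest $k$ with $N^{k}\neq 0$ is always $k=1$, so $x_{2}(t)$ is the full sum $-\sum_{k=1}^{h-1}N^{k}\delta^{(k\alpha-1)}(t)\,x_{20}$ rather than a single term, but your closing linear-independence remark already covers this.
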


Similar to paper \cite{Ishihara2001Impulse}, the following gives another condition of impulse-free.

\begin{lemma}\label{lemSFOSIFb}
The following statements are equivalent:
\begin{enumerate}
  \item the regular system $(E,A,\alpha)$ is impulse-free;
  \item there exist a vector $\upsilon\in\mathbb{R}^n$ and a vector
  $\omega\in\mathbb{R}^n$ such that
  \begin{align*}
  E\upsilon &=0\\
  A\upsilon &=E\omega
  \end{align*}
  then $\upsilon=0$.
\end{enumerate}
\end{lemma}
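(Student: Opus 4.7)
The plan is to use the decomposition from Theorem \ref{theoSFOSregular} to translate condition (2) into a linear-algebraic statement about the nilpotent block $N$, and then invoke Theorem \ref{theoImpsfe}, which already characterizes impulse-freeness as $N = 0$. It will therefore suffice to prove that condition (2) is equivalent to $N = 0$.

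Fix nonsingular $Q, P$ supplied by the regularity decomposition, so that $QEP = \mathrm{diag}(I_{n_1}, N)$ and $QAP = \mathrm{diag}(A_1, I_{n_2})$. For arbitrary $\upsilon, \omega \in \mathbb{R}^n$, partition $P^{-1}\upsilon = (\upsilon_1^T, \upsilon_2^T)^T$ and $P^{-1}\omega = (\omega_1^T, \omega_2^T)^T$ compatibly with the block sizes $n_1, n_2$. Left-multiplying $E\upsilon = 0$ and $A\upsilon = E\omega$ by $Q$ turns them into the four block identities $\upsilon_1 = 0$, $N\upsilon_2 = 0$, $A_1 \upsilon_1 = \omega_1$, and $\upsilon_2 = N\omega_2$. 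Since $P$ is invertible and the first block forces $\upsilon_1 = 0$ automatically, the conclusion $\upsilon = 0$ is equivalent to $\upsilon_2 = 0$. Hence condition (2) collapses to the single linear-algebraic statement $\ker(N) \cap \mathrm{range}(N) = \{0\}$.

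It then remains to show that for a nilpotent matrix $N$ this intersection condition holds precisely when $N = 0$. If $N = 0$ then $\mathrm{range}(N) = \{0\}$ and the condition is trivial, so $\upsilon_2 = 0$ and hence $\upsilon = 0$. Conversely, if $N$ has nilpotent index $h \geq 2$, pick $\omega_2$ with $N^{h-1}\omega_2 \neq 0$ and set $\upsilon_2 := N^{h-1}\omega_2$; then $\upsilon_2 \neq 0$ lies in $\ker(N)$ (because $N^h = 0$) and in $\mathrm{range}(N)$ (via $\upsilon_2 = N(N^{h-2}\omega_2)$), which refutes the intersection condition and hence (2). Combined with Theorem \ref{theoImpsfe} this yields (1) $\Leftrightarrow$ (2). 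The main obstacle is merely the bookkeeping of the block partition; once both conditions are seen to reduce to $\ker(N) \cap \mathrm{range}(N) = \{0\}$, the rest is a one-line nilpotent-matrix argument.
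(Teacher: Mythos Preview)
Your proof is correct and follows essentially the same route as the paper: both reduce condition (2) to a statement about the nilpotent block $N$ via the Weierstrass decomposition, obtain the same four block equations, and conclude through Theorem \ref{theoImpsfe}. Your formulation of the residual condition as $\ker(N)\cap\mathrm{range}(N)=\{0\}$ and the explicit $N^{h-1}\omega_2$ construction are in fact cleaner than the paper's one-line ``$\omega_2$ is not specific, therefore $\upsilon_2=0$ iff $N=0$.''
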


\begin{proof}
According to Theorem \ref{theoSFOSregular}, regular system $(E,A,\alpha)$ has the decomposition that
\begin{align*}
QEP=diag(I,N)\\
QAP=diag(A,I)
\end{align*}

Thus,
\begin{align*}
& \left\{
\begin{array}{l}
E\upsilon=0\\
A\upsilon=E\omega
\end{array}
\right. \\
\Leftrightarrow &
\left\{
\begin{array}{l}
QEPP^{-1}\upsilon=0\\
QAPP^{-1}\upsilon=QEPP^{-1}\omega
\end{array}
\right. \\
\Leftrightarrow &
\left\{
\begin{array}{l}
\begin{bmatrix}
I & 0 \\
0 & N
\end{bmatrix}\begin{bmatrix}
\upsilon_{1}\\
\upsilon_{2}
\end{bmatrix}
=0\\
\begin{bmatrix}
A & 0 \\
0 & I
\end{bmatrix}\begin{bmatrix}
\upsilon_{1}\\
\upsilon_{2}
\end{bmatrix}=\begin{bmatrix}
I & 0 \\
0 & N
\end{bmatrix}\begin{bmatrix}
\omega_{1}\\
\omega_{2}
\end{bmatrix}
\end{array}
\right. \\
\Leftrightarrow &
\left\{
\begin{array}{l}
\upsilon_{1}=0\\
N\upsilon_{2}=0\\
A\upsilon_{1}=\omega_{1}\\
\upsilon_{2}=N\omega_{2}
\end{array}
\right. \\
\end{align*}
$\omega_{2}$ is not specific, therefore, $\upsilon_{2}=0$ if and only if $N=0$,
which is the sufficient and necessary condition of impulse-free.
Because $P$ is nonsingular, we can conclude that $\upsilon=0$ if and only if $N=0$.

This ends the proof.
\end{proof}

\begin{theorem}
\label{theoSFOSinfIF} The regular SFOS (\ref{RegSFOS}) is impulse-free if and only if there
exists no infinite eigenvector of order 2, i.e. $\upsilon^{2}$.
\end{theorem}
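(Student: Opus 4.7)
The plan is to deduce the theorem directly from Lemma \ref{lemSFOSIFb} by recognizing that the two conditions are essentially rewordings of one another. By the definition of infinite eigenvectors, an order-2 infinite eigenvector $\upsilon^2$ exists precisely when there is a nonzero order-1 infinite eigenvector $\upsilon^1$ (i.e.\ $E\upsilon^1=0$ with $\upsilon^1\neq 0$) together with a vector $\upsilon^2\neq 0$ satisfying $E\upsilon^2=A\upsilon^1$. Renaming $\upsilon^1$ as $\upsilon$ and $\upsilon^2$ as $\omega$, this is exactly the statement that the system \emph{fails} the condition in Lemma \ref{lemSFOSIFb}.

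Concretely, I would argue in two directions. First, if an order-2 infinite eigenvector exists, then there is a nonzero $\upsilon$ and some $\omega$ with $E\upsilon=0$ and $A\upsilon=E\omega$; by the lemma, the system is not impulse-free. Conversely, if the system is not impulse-free, the lemma supplies $\upsilon\neq 0$ and $\omega$ with $E\upsilon=0$ and $A\upsilon=E\omega$; setting $\upsilon^1:=\upsilon$ gives an order-1 infinite eigenvector, and I would then verify that $\omega$ can serve as a genuine order-2 infinite eigenvector by checking $\omega\neq 0$.

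The only non-cosmetic step is this last check. Regularity of the system (Theorem \ref{theoSFOSregular}) makes the pencil $sE-A$ regular, because in the canonical form the determinant reduces to $\det(sI-A_1)\det(sN-I)$ with $\det(sN-I)$ a nonzero constant. Consequently, if $\upsilon\neq 0$ satisfied both $E\upsilon=0$ and $A\upsilon=0$, then $(sE-A)\upsilon=0$ for every $s$, contradicting regularity. Hence $A\upsilon\neq 0$, so $E\omega=A\upsilon\neq 0$, and therefore $\omega\neq 0$ is a legitimate $\upsilon^2$.

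The main obstacle is simply keeping the definitions aligned, in particular justifying the nonzero requirement for $\upsilon^2$; the rest is a direct logical equivalence with Lemma \ref{lemSFOSIFb}. I expect the written proof to be short: one short paragraph establishing the dictionary between the two conditions and one line invoking regularity to get $\omega\neq 0$.
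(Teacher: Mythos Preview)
Your proposal is correct and follows essentially the same route as the paper: both arguments reduce the theorem directly to Lemma~\ref{lemSFOSIFb} by identifying $(\upsilon,\omega)$ with $(\upsilon^1,\upsilon^2)$. Your version is in fact slightly more careful than the paper's, which does not explicitly justify $\upsilon^2\neq 0$; your use of regularity to rule out $E\upsilon=A\upsilon=0$ for nonzero $\upsilon$ cleanly fills that small gap.
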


\begin{proof}
According to Lemma \ref{lemSFOSIFb}, the sufficient and necessary condition of  impulse-free is
\begin{align*}
E\upsilon^1 &=0\\
E\upsilon^2 &=A\upsilon^1=0
\end{align*}
 which implies that the infinite eigenvector of order 2 does not exist.

 This ends the proof.
\end{proof}

\section{Stability and Admissibility Analysis}

\subsection{Asymptotic Stability}

Stability is very important in control theory. In
\cite{NDoye2010Stabilization}, a sufficient condition of asymptotic stability
is given, but the condition demands free impulse. Meanwhile,
\cite{Yao2013Sufcient} also gives a sufficient condition of asymptotic
stability, but its fractional order is $0<\alpha<1$. The following gives a
sufficient and necessary condition of asymptotic stability with fractional
order $1<\alpha<2$, which is simpler than the condition in
\cite{NDoye2010Stabilization}. Consider the autonomous regular SFOS
\begin{equation}
ED^{\alpha}x\left(  t\right)  =Ax\left(  t\right)  \label{SFOS_stability}%
\end{equation}
where $x\left(  t\right)  \in%
\mathbb{R}
^{n}$, $1<\alpha<2$.

The following will analyse the asymptotic stability (stability for short) of
SFOS. Firstly, the definition of the stability of SFOS is given as follows.

\begin{definition}
For arbitrary admissible initial condition $x(0),$ if regular SFOS
(\ref{SFOS_stability}) satisfies $\underset{t\rightarrow+\infty}{\lim
}\left \Vert x(t)\right \Vert =0,$ then the SFOS (\ref{SFOS_stability}) is
called \textit{asymptotically stable}.
\end{definition}

The characteristic polynomial of system (\ref{SFOS_stability}) is
\begin{equation}
\Delta \left(  s\right)  =\det \left(  s^{\alpha}E-A\right)  =a_{n_{^{1}}%
}\left(  s^{\alpha}\right)  ^{n_{1}}+\cdots+a_{1}s^{\alpha}+a_{0}
\label{SFOS charactpoly}%
\end{equation}

It's obvious that the polynomial $\Delta \left(  s\right)  $ is a multivalued
function of $s$, of which the fractional degree is $n_{1}$ ($n_{1}\leq n$).
Let $s^{\alpha}=\omega,$ then $\Delta \left(  s\right)  $ turns into a
single-valued function $\overset{\_}{\Delta}(\omega)=\det(\omega E-A)$.
$\Delta \left(  s\right)  $ has a lot of roots but only the roots on the
principal Riemann surface $\Omega=\left \{  s\mid-\pi \leq \arg(s)<\pi \right \}  $
decide the time-domain behavior and stability of fractional system
(\cite{Gross1961Singularities,Beyer1995Definition}). Therefore, the physical
domain of $\Delta \left(  s\right)  $ is defined on the principal Riemann
surface. And the finite roots of $\Delta \left(  s\right)  $ on the principal
Riemann surface $\Omega$ are defined as the finite roots of SFOS
(\ref{SFOS_stability}).

\begin{lemma}
\label{lemStable} Fractional order system \cite{Moze2005LMI}
\begin{equation}
D^{\alpha}x\left(  t\right)  =Ax\left(  t\right)  ,\text{ \ }1<\alpha<2
\label{FOS model}%
\end{equation}
is asymptotically stable if and only if $\left \vert \arg \left(
spec(A)\right)  \right \vert $$>\alpha \pi/2$, where $spec(A)$ is the spectrum
(set of all eigenvalues) of $A$. Also, state vector $x(t)$ decays towards 0
and meets the following condition: $\left \Vert x(t)\right \Vert <Kt^{-\alpha
},t>0,K>0$.
\end{lemma}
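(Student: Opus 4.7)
The plan is to derive the solution of \eqref{FOS model} in closed form and analyse its decay via the asymptotic expansion of the Mittag--Leffler function. Applying the Laplace transform gives $X(s)=(s^{\alpha}I-A)^{-1}\bigl(s^{\alpha-1}x(0)+s^{\alpha-2}\dot{x}(0)\bigr)$, which, after inverse transform, yields
\[
x(t)=E_{\alpha,1}(At^{\alpha})\,x(0)+t\,E_{\alpha,2}(At^{\alpha})\,\dot{x}(0).
\]
I would then put $A$ in Jordan canonical form $A=PJP^{-1}$ so that each matrix Mittag--Leffler function becomes block-diagonal; since $E_{\alpha,\beta}$ is entire, the entries of $E_{\alpha,\beta}(J_{k}(\lambda)t^{\alpha})$ for a Jordan block $J_{k}(\lambda)$ are linear combinations of the derivatives $E_{\alpha,\beta}^{(j)}(\lambda t^{\alpha})$ multiplied by powers of $t^{\alpha}$. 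This reduces the whole question to the scalar behaviour of $E_{\alpha,\beta}(\lambda t^{\alpha})$ as $t\to\infty$ for each eigenvalue $\lambda$.

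The heart of the argument is the classical asymptotic expansion of $E_{\alpha,\beta}$ for $1<\alpha<2$ from \cite{Podlubny1999Fractional}: when $|\arg z|<\alpha\pi/2$, an exponential contribution of the form $(1/\alpha)\,z^{(1-\beta)/\alpha}\exp(z^{1/\alpha})$ dominates, whereas when $|\arg z|>\alpha\pi/2$ the function admits a purely algebraic tail $-\sum_{k=1}^{N}z^{-k}/\Gamma(\beta-\alpha k)+O(|z|^{-N-1})$. With $z=\lambda t^{\alpha}$ and $t>0$ one has $\arg z=\arg\lambda$, so the condition $|\arg\lambda|>\alpha\pi/2$ for every eigenvalue is exactly what suppresses the exponential branch, leaving the leading algebraic term of order $t^{-\alpha}$.

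For sufficiency, assuming $|\arg(\mathrm{spec}(A))|>\alpha\pi/2$, every scalar factor $E_{\alpha,\beta}(\lambda t^{\alpha})$ in each Jordan block decays as $t^{-\alpha}$, and its derivatives, rewritten via the identity $(d/dz)E_{\alpha,\beta}(z)=\bigl(E_{\alpha,\beta-1}(z)-(\beta-1)E_{\alpha,\beta}(z)\bigr)/(\alpha z)$, inherit the same algebraic decay or faster. The $t^{\alpha j}$ factors appearing on the super-diagonals of $E_{\alpha,\beta}(J_{k}(\lambda)t^{\alpha})$ are absorbed by the higher-order tails $O(t^{-(j+1)\alpha})$, so each block is bounded in norm by $Kt^{-\alpha}$; summing over blocks and returning to the original basis gives $\|x(t)\|<Kt^{-\alpha}$, and in particular $x(t)\to 0$. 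For necessity, if some eigenvalue $\lambda$ satisfies $|\arg\lambda|\leq\alpha\pi/2$, one chooses an initial condition along an associated (generalised) eigenvector; the dominant exponential $\exp(\lambda^{1/\alpha}t)$ has real part $|\lambda|^{1/\alpha}\cos(\arg\lambda/\alpha)\geq 0$, so $x(t)$ cannot tend to zero.

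The main obstacle is making the Jordan-block bookkeeping rigorous: the super-diagonal entries of $E_{\alpha,\beta}(J_{k}(\lambda)t^{\alpha})$ mix derivatives of $E_{\alpha,\beta}$ with positive powers of $t$, and one must check that these $t$-factors are strictly dominated by the corresponding higher-order algebraic tails in the Mittag--Leffler asymptotic. Using the derivative recursion above to replace every $E_{\alpha,\beta}^{(j)}$ by a combination of shifted Mittag--Leffler values, and then invoking the same asymptotic expansion uniformly in $\beta$, resolves this issue and delivers the uniform $t^{-\alpha}$ bound stated in the lemma.
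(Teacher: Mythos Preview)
The paper does not prove this lemma at all: it is stated with a citation to \cite{Moze2005LMI} and used as a black box in the proof of Theorem~\ref{theoStable}. So there is no ``paper's own proof'' to compare against. Your sketch is the standard route to this result (essentially Matignon's argument extended to $1<\alpha<2$ and to non-diagonalisable $A$): explicit Mittag--Leffler solution, Jordan reduction, and the sectorial asymptotics of $E_{\alpha,\beta}$ from \cite{Podlubny1999Fractional}. The outline is correct, and your handling of the super-diagonal terms---rewriting $E_{\alpha,\beta}^{(j)}(\lambda t^{\alpha})$ via the recursion so that the $t^{\alpha j}$ prefactors cancel against the $z^{-j-1}$ tails---is exactly what is needed to recover the uniform $t^{-\alpha}$ bound for nontrivial Jordan blocks. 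One small point worth tightening: on the boundary $|\arg\lambda|=\alpha\pi/2$ the asymptotic expansion of $E_{\alpha,\beta}$ takes a slightly different form (the exponential term is still present but with purely imaginary exponent), so you should invoke that variant explicitly rather than the generic $|\arg z|<\alpha\pi/2$ expansion when arguing necessity in the critical case.
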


\begin{theorem}
\label{theoStable} SFOS (\ref{SFOS_stability}) is asymptotically stable if and
only if%
\[
\left \vert \arg \left(  spec\left(  E,A\right)  \right)  \right \vert >\frac
{\pi}{2}\alpha
\]

where $spec(E,A)$ is the spectrum (set of all eigenvalues) of $(E,A,\alpha)$.
\end{theorem}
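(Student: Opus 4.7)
The plan is to reduce the problem to the normal fractional system via the regular decomposition of Theorem \ref{theoSFOSregular}, so that we can invoke Lemma \ref{lemStable}. Since $(E,A,\alpha)$ is regular, pick nonsingular $Q,P$ with $QEP=\operatorname{diag}(I_{n_1},N)$ and $QAP=\operatorname{diag}(A_1,I_{n_2})$, where $N$ is nilpotent. Setting $x=P[x_1^T\; x_2^T]^T$, system (\ref{SFOS_stability}) splits into a finite subsystem $D^{\alpha}x_1(t)=A_1 x_1(t)$ and an infinite subsystem $N D^{\alpha}x_2(t)=x_2(t)$. Since $P$ is nonsingular, $\|x(t)\|\to 0$ iff $\|x_1(t)\|\to 0$ and $\|x_2(t)\|\to 0$ simultaneously.

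Next I would handle the two subsystems separately. For the finite subsystem, Lemma \ref{lemStable} gives exactly: $D^{\alpha}x_1=A_1 x_1$ is asymptotically stable iff $|\arg(\operatorname{spec}(A_1))|>\alpha\pi/2$. For the infinite subsystem, the state response in Theorem \ref{theoSFOSresponse} (with $u\equiv 0$) shows that $x_2(t)$ is a finite sum of terms of the form $N^k\delta^{(k\alpha-1)}(t)x_{20}+N^k\delta^{(k\alpha-2)}(t)x_{20}^{(1)}$ once the initial condition is restricted to be admissible. As pointed out after Theorem \ref{theoSFOSresponse} via the final value theorem (\ref{finalVT}), $\delta^{(\beta)}(t)\to 0$ as $t\to\infty$, so $x_2(t)\to 0$ automatically, and the infinite part contributes nothing to the stability condition. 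Thus asymptotic stability of the SFOS is equivalent to asymptotic stability of the finite subsystem, i.e., $|\arg(\operatorname{spec}(A_1))|>\alpha\pi/2$.

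The remaining step is to identify $\operatorname{spec}(A_1)$ with $\operatorname{spec}(E,A)$. Using the decomposition and $\det Q\det P\neq 0$,
\begin{equation*}
\det(sE-A)=(\det Q\det P)^{-1}\det\!\begin{bmatrix} sI_{n_1}-A_1 & 0\\ 0 & sN-I_{n_2}\end{bmatrix}=(\det Q\det P)^{-1}(-1)^{n_2}\det(sI_{n_1}-A_1),
\end{equation*}
since $N$ is nilpotent makes $\det(sN-I_{n_2})=(-1)^{n_2}$. Therefore the finite eigenvalues of $(E,A)$ are precisely the eigenvalues of $A_1$, so $\operatorname{spec}(E,A)=\operatorname{spec}(A_1)$, and combining with the preceding step yields the stated condition $|\arg(\operatorname{spec}(E,A))|>\alpha\pi/2$.

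The main obstacle I expect is the infinite-subsystem bookkeeping: one must justify carefully that once the initial condition is \emph{admissible}, the $\delta^{(k\alpha-1)}$ and $\delta^{(k\alpha-2)}$ terms decay at infinity (so that the infinite subsystem does not obstruct stability), and that the nilpotency of $N$ truncates these sums to finitely many well-defined terms. Once these analytic facts are granted from Theorem \ref{theoSFOSresponse} and the final value theorem, the rest is a clean algebraic reduction to Lemma \ref{lemStable}.
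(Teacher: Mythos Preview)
Your proposal is correct and follows essentially the same approach as the paper: decompose via Theorem \ref{theoSFOSregular}, apply Lemma \ref{lemStable} to the finite subsystem, use the state response from Theorem \ref{theoSFOSresponse} together with the final value theorem (\ref{finalVT}) to dispose of the infinite subsystem, and then identify $\operatorname{spec}(E,A)$ with $\operatorname{spec}(A_1)$ via the nilpotency of $N$. The only cosmetic difference is that you compute $\det(sE-A)$ directly through the block-diagonal form, whereas the paper phrases the same fact as $\operatorname{spec}(N,I_{n_2})=\varnothing$ and takes a union of spectra.
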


\begin{proof}
Because the system (\ref{SFOS_stability}) is regular, two nonsingular matrices
$Q,P$ may be chosen such that system (\ref{SFOS_stability}) is equivalent to
\begin{align}
D^{\alpha}x_{1}\left(  t\right)   &  =A_{1}x_{1}\left(  t\right)
\label{SFOSstab finite}\\
ND^{\alpha}x_{2}\left(  t\right)   &  =x_{2}\left(  t\right)
\label{SFOSstab infinite}%
\end{align}
where $\left[  x_{1}^{T}\left(  t\right)  \text{ \ }x_{2}^{T}\left(  t\right)
\right]  ^{T}=P^{-1}x$, $QEP=diag\left(  I_{n_{1}},N\right)  $,
$QAP=diag\left(  A_{1},I_{n-n_{1}}\right)  $. According to theorem
\ref{theoSFOSresponse}, the state response to system (\ref{SFOS_stability})
is
\begin{align}
x_{1}\left(  t\right)   &  =E_{\alpha,1}\left(  A_{1}t^{\alpha}\right)
x_{10}\nonumber \\
x_{2}\left(  t\right)   &  =-\underset{k=1}{\overset{h-1}{%
{\textstyle \sum}
}}N^{k}\left(  \delta^{\left(  k\alpha-1\right)  }\left(  t\right)
x_{20}+\delta^{\left(  k\alpha-2\right)  }\left(  t\right)  x_{20}^{\left(
1\right)  }\right)  \label{equx1x2}%
\end{align}
According to lemma \ref{lemStable}, finite subsystem (\ref{SFOSstab finite})
is stable if and only if 
\[
\left \vert \arg \left(  spec\left(  A_{1}\right)
\right)  \right \vert > \alpha \pi/2
\]

For the infinite subsystem (\ref{SFOSstab infinite}), according to the final
value theorem (\ref{finalVT}), when $t\rightarrow+\infty$, the state response
$x_{2}(t)\rightarrow0$. Thus, the infinite subsystem is essentially stable.

On the other hand, $\det(sN-I_{n-n_{1}})=(-1)^{n-n_{1}}$\ because $N$ is
nilpotent. Thus, $spec(N,I_{n-n_{1}})=\varnothing$ and
\begin{align*}
spec(E,A)  &  =spec(QEP,QAP)\\
&  =spec(diag(I_{n_{1}},N),diag(A_{1},I_{n-n_{1}}))\\
&  =spec(I_{n_{1}},A_{1})\cup spec(N,I_{n-n_{1}})\\
&  =spec(A_{1})\cup \varnothing \\
&  =spec(A_{1})
\end{align*}

i.e. $spec(E,A)=spec(A_{1})$. The theorem is proved.
\end{proof}

\subsection{Admissibility}

In \cite{Yao2013Sufcient}, a sufficient and necessary condition of
admissibility with fractional order $0<\alpha<1$ is given. The following gives
a sufficient and necessary condition of admissibility with fractional order
$1<\alpha<2$.

Similarly to the admissibility of SIOS, the following gives the definition of
admissibility for SFOS.

\begin{definition}
\label{defSFOSadimis} If a SFOS is regular, impulse-free and stable, then the
SFOS is termed \textit{admissible}.
\end{definition}

From the above analysis, we can know that the sufficient and necessary
conditions of regularity, free impulse, stability for SFOS are only related to
matrices $E,A$ and fractional order $\alpha$. Thus, the admissibility of SFOS
is only related to $E,A$ and $\alpha$.

According to theorem \ref{theoStable}, SFOS (\ref{SFOS_stability}) is stable
if and only if all the finite eigenvalues of SFOS belong to the region
$\Lambda=\left \{  \lambda \in%
\mathbb{C}
\mid \left \vert \arg(\lambda)\right \vert >\pi \alpha/2\right \}  .$ When
$1<\alpha<2,$ $\Lambda$ is a LMI region. Thus, we can analyse the
admissibility of system $(E,A,\alpha)$ via $\mathcal{D}$-stable theorem.

\begin{definition}
If system $(E,A,\alpha)$ is regular, impulse-free and all the finite
eigenvalues belong to the region $\mathcal{D}$, then system $(E,A,\alpha)$ is
termed $\mathcal{D}$-admissible.
\end{definition}

\begin{theorem}
\label{theoDStable} Let $rank(E)=r$, $E_{0}\in \mathbb{R}^{n\times \left(
n-r\right)  }$ column full rank and $E^{T}E_{0}=0$. SFOS is $\mathcal{D}%
$-admissible if and only if there exist symmetric positive matrix $P\in%
\mathbb{R}
^{n\times n}$ and matrix $Q\in%
\mathbb{R}
^{(n-r)\times n}$ such that%
\begin{equation}
M_{s}(E,A,P,Q)<0 \label{inequDStable}%
\end{equation}
where
\[
M_{s}\left(  E,A,P,Q\right)  =\Phi \otimes \left(  E^{T}PE\right)  +Sym\left \{
\Psi \otimes \left(  E^{T}PA\right)  +I_{d}\otimes \left(  Q^{T}E_{0}%
^{T}A\right)  \right \}
\]

$\Phi \in%
\mathbb{R}
^{d\times d}$ is a symmetric martrix and $\Psi \in%
\mathbb{R}
^{d\times d}$, $I_{d}$ has the dimension $d\times d$.
\end{theorem}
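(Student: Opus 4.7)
The plan is to invoke the Gantmacher decomposition of Lemma~\ref{lemGantmacher}, specialised via Theorems~\ref{theoSFOSregular} and~\ref{theoImpsfe} to the admissible canonical form $\tilde{Q}E\tilde{P}=diag(I_{n_1},0)$, $\tilde{Q}A\tilde{P}=diag(A_1,I_{n_2})$, in order to reduce the $\mathcal{D}$-admissibility of $(E,A,\alpha)$ to the ordinary $\mathcal{D}$-stability of the finite-subsystem matrix $A_1$, for which Lemma~\ref{lemADstable} already supplies the desired LMI. The auxiliary matrix $Q$, paired with the left-kernel generator $E_0$, is exactly what encodes the extra ``infinite-mode'' freedom that distinguishes admissibility from plain $\mathcal{D}$-stability of a square matrix.

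For the necessity direction I would assume $(E,A,\alpha)$ is $\mathcal{D}$-admissible and put it in the admissible form above, with $spec(A_1)\subset\mathcal{D}$. Lemma~\ref{lemADstable} then yields $X_1>0$ satisfying $\Phi\otimes X_1+\Psi\otimes(X_1A_1)+\Psi^T\otimes(A_1^TX_1)<0$. Since $E^TE_0=0$, in the canonical basis $\tilde{Q}^{-T}E_0$ must have its top $n_1$ rows zero and its bottom block $F\in\mathbb{R}^{n_2\times n_2}$ invertible. Setting
\[
P=\tilde{Q}^{T}\,diag(X_1,I_{n_2})\,\tilde{Q},\qquad Q=\bigl[\,0\ \ -\tfrac{\varepsilon}{2}F^{-1}\,\bigr]\tilde{P}^{-1}
\]
for a small $\varepsilon>0$, a direct congruence of $M_s(E,A,P,Q)$ by $I_d\otimes\tilde{P}$ block-diagonalises it into $diag\bigl(M_{\mathcal{D}}(A_1,X_1),\,-\varepsilon I_{d n_2}\bigr)<0$, which establishes the LMI.

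For the sufficiency direction I would assume the LMI is feasible and proceed without presupposing regularity, starting from Lemma~\ref{lemGantmacher} in full generality. Setting $\hat{P}:=\tilde{Q}^{-T}P\tilde{Q}^{-1}>0$ and conjugating by $I_d\otimes\tilde{P}$, the transformed inequality splits blockwise along the Kronecker summands. The finite $(1,1)$-block evaluates exactly to $M_{\mathcal{D}}(A_1,\hat{P}_{11})<0$, so by Lemma~\ref{lemADstable} $A_1$ is $\mathcal{D}$-stable. For the remaining blocks, strict negativity of the infinite-mode $(2,2)$-block, after bounding the $N$-dependent cross terms via Lemma~\ref{lemIneMa}, forces $N=0$ and excludes the rectangular Kronecker sub-blocks $L_i,J_i,L_j',J_j'$; this uses the nilpotency of $N$ together with the fact that $E_0$ is column full rank with $E^TE_0=0$, so that its canonical image parametrises the left kernel of $E$ with precisely the right dimensions. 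Theorems~\ref{theoSFOSregular} and~\ref{theoImpsfe} then repackage the resulting structure as regularity and impulse-freeness, which combined with the $\mathcal{D}$-stability of $A_1$ deliver $\mathcal{D}$-admissibility.

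The main obstacle is this last step: making explicit how the free term $I_d\otimes(Q^TE_0^TA+A^TE_0Q)$ cancels the residual off-diagonal entries that would otherwise prevent negative definiteness, and thereby forces both $N=0$ and the absence of the singular Kronecker blocks. The delicate point is to avoid circularity -- one cannot tacitly presuppose regularity or $N=0$ when partitioning $\hat{P}$ and $Q$ in the canonical basis, but must let the block inequalities themselves drive these conclusions, relying only on the nilpotent structure coming from Lemma~\ref{lemGantmacher} and the rank conditions on $E_0$.
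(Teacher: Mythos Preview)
Your necessity argument follows the paper's: pass to the admissible canonical form $\tilde Q E\tilde P=diag(I_r,0)$, $\tilde Q A\tilde P=diag(A_1,I_{n-r})$, invoke Lemma~\ref{lemADstable} for $A_1$, and build $P,Q$ explicitly with a small $\varepsilon>0$ to make the infinite block strictly negative. The paper executes this via a Schur-complement manipulation rather than your direct block-diagonal congruence, but the construction is the same in substance.

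For sufficiency the paper takes a quite different and considerably shorter route than your block-decomposition plan. Rather than conjugating \eqref{inequDStable} to the Gantmacher basis and extracting structural constraints block by block, it tests the LMI against individual vectors. Multiplying \eqref{inequDStable} on both sides by $I_d\otimes v^1$, where $Ev^1=0$ and $Ev^2=Av^1$ for some $v^2$, collapses everything to $I_d\otimes\bigl[(v^2)^T(E^TE_0Q+Q^TE_0^TE)v^2\bigr]<0$, which contradicts $E^TE_0=0$; hence no infinite eigenvector of order two exists, and Theorem~\ref{theoSFOSinfIF} gives impulse-freeness. Likewise, multiplying by $I_d\otimes v$ for a finite eigenpair $Av=\lambda Ev$ yields $(v^*E^TPEv)\,f_{\mathcal D}(\lambda)<0$, whence $f_{\mathcal D}(\lambda)<0$ since $P>0$. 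This eigenvector-testing approach completely sidesteps the circularity you correctly flagged: it never needs the canonical decomposition on the sufficiency side, so there is no need to first establish regularity or argue away the rectangular $L_i,L_j'$ blocks of Lemma~\ref{lemGantmacher}, and the cross-term bookkeeping you anticipate via Lemma~\ref{lemIneMa} simply does not arise. Your route may be completable, but the obstacle you identify is genuine and nontrivial, whereas the paper's argument avoids it outright at the cost of a slightly less constructive proof.
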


\begin{proof}

Sufficiency.

We will prove it by contradiction. Assume that SFOS is impulsive. According to
Theorem \ref{theoSFOSinfIF}, there exists eigenvector $\upsilon^{2}\in%
\mathbb{R}
^{n}$ such that $E\upsilon^{2}=A\upsilon^{1}$ and $E\upsilon^{1}=0.$ By left
multiplying $\left(  I_{d}\otimes v^{1}\right)  ^{T}$ and right multiplying
$\left(  I_{d}\otimes v^{1}\right)  $ on (\ref{inequDStable}), we have
\[
\left(  I_{d}\otimes v^{1}\right)  ^{T}M_{s}\left(  E,A,P,Q\right)  \left(
I_{d}\otimes v^{1}\right)  <0\
\]
i.e.
\begin{equation}
I_{d}\otimes \left[  \left(  v^{2}\right)  ^{T}\left(  E^{T}E_{0}Q+Q^{T}%
E_{0}^{T}E\right)  v^{2}\right]  <0\text{ } \label{inequSuff}%
\end{equation}
Because $E^{T}E_{0}=0$, the inequation (\ref{inequSuff}) can't be true. Thus,
system $(E,A,\alpha)$ is impulse-free.

Let $\lambda$ be the finite eigenvalue of system $(E,A,\alpha)$ and $\upsilon$
be the eigenvector, then we get $\ A\upsilon=\lambda E\upsilon$ and
$\upsilon^{\ast}A^{T}=\overset{\_}{\lambda}\upsilon^{\ast}E^{T}.$ From
inequality (\ref{inequDStable}), we get
\begin{align*}
&  \left(  I_{d}\otimes v\right)  ^{\ast}M_{s}\left(  E,A,P,Q\right)  \left(
I_{d}\otimes v\right) \\
=  &  \  \Phi \otimes \left(  v^{\ast}E^{T}PEv\right)  +\Psi \otimes \left(
\lambda v^{\ast}E^{T}PEv\right)  +\Psi^{T}\otimes \left(  \bar{\lambda}v^{\ast
}E^{T}PEv\right) \\
=  &  \ v^{\ast}E^{T}PEv\left(  \Phi+\lambda \Psi+\bar{\lambda}\Psi^{T}\right)
\\
=  &  \ v^{\ast}E^{T}PEvf_{\mathcal{D}}\left(  \lambda \right)  <0
\end{align*}
Because $P>0$, we can get $f_{\mathcal{D}}\left(  \lambda \right)  <0.$
According to the definition (\ref{defSFOSadimis}), we can conclude that system
$(E,A,\alpha)$ is $\mathcal{D}$-admissible.

Necessity.

Because the system $(E,A,\alpha)$ is regular and impulse-free, there exist two
nonsingular matrices $\ M$ and $N$ such that
\begin{align}
MEN  &  =\left[
\begin{array}
[c]{c}%
M_{1}\\
M_{2}%
\end{array}
\right]  E\left[
\begin{array}
[c]{cc}%
N_{1} & N_{2}%
\end{array}
\right]  =\left[
\begin{array}
[c]{cc}%
I_{r} & 0\\
0 & 0
\end{array}
\right]  \text{ }\nonumber \\
MAN  &  =\left[
\begin{array}
[c]{c}%
M_{1}\\
M_{2}%
\end{array}
\right]  A\left[
\begin{array}
[c]{cc}%
N_{1} & N_{2}%
\end{array}
\right]  =\left[
\begin{array}
[c]{cc}%
A_{1} & 0\\
0 & I_{n-r}%
\end{array}
\right]  \text{ } \label{equAdmNece}%
\end{align}
where $M_{1}\in%
\mathbb{R}
^{r\times n}$, $N_{1}\in%
\mathbb{R}
^{n\times r}$. Obiviously, $M_{2}AN=\left[
\begin{array}
[c]{cc}%
0 & I_{n-r}%
\end{array}
\right]  $, $M_{2}E=0$.

The system $(E,A,\alpha)$ is $\mathcal{D}$-admissible, thus we get
$\lambda(E,A)=\lambda(A_{1})$, i.e. $A_{1}$ is $\mathcal{D}$-stable. According
to lemma \ref{lemADstable}, there exists a symmetric real matrix $P_{1}>0$
such that
\[
\Phi \otimes P_{1}+\Psi \otimes \left(  P_{1}A_{1}\right)  +\Psi^{T}%
\otimes \left(  A_{1}^{T}P_{1}\right)  <0\text{ }%
\]
Since it's a strict inequality, there must exist a sufficiently small
$\varepsilon>0$ such that
\[
\Phi \otimes P_{1}+\Psi \otimes \left(  P_{1}A_{1}\right)  +\Psi^{T}%
\otimes \left(  A_{1}^{T}P_{1}\right)  +I_{d}\otimes \left(  \frac{\varepsilon
}{2}N_{1}^{T}N_{1}\right)  <0\text{ }%
\]
i.e.
\begin{gather}
\Phi \otimes P_{1}+\Psi \otimes \left(  P_{1}A_{1}\right)  +\Psi^{T}%
\otimes \left(  A_{1}^{T}P_{1}\right)  +\left[  I_{d}\otimes \left(  \varepsilon
N_{1}^{T}N_{2}\right)  \right] \nonumber \\
\left[  I_{d}\otimes \left(  2\varepsilon N_{2}^{T}N_{2}\right)  \right]
^{-1}\left[  I_{d}\otimes \left(  \varepsilon N_{2}^{T}N_{1}\right)  \right]
<0 \label{inequAdmiNece}%
\end{gather}
Invoking Schur complement, inequality (\ref{inequAdmiNece}) is equivalent to
\begin{gather*}
\left[
\begin{array}
[c]{cc}%
\Phi \otimes P_{1}+\Psi \otimes \left(  P_{1}A_{1}\right)  +\Psi^{T}%
\otimes \left(  A_{1}^{T}P_{1}\right)  & -I_{d}\otimes \left(  \varepsilon
N_{1}^{T}N_{2}\right) \\
\bullet & -I_{d}\otimes \left(  2\varepsilon N_{2}^{T}N_{2}\right)
\end{array}
\right]  <0\\
\Leftrightarrow \Phi \otimes \left[
\begin{array}
[c]{cc}%
P_{1} & 0\\
0 & 0
\end{array}
\right]  +Sym\left \{  \Psi \otimes \left[
\begin{array}
[c]{cc}%
P_{1}A_{1} & 0\\
0 & 0
\end{array}
\right]  +I_{d}\otimes \left(  \left[
\begin{array}
[c]{c}%
0\\
I_{n-r}%
\end{array}
\right]  \right.  \right. \\
\left.  \left.  \left[
\begin{array}
[c]{cc}%
-\varepsilon N_{2}^{T}N_{1} & -\varepsilon N_{2}^{T}N_{2}%
\end{array}
\right]  \right)  \right \}  <0\\
\Leftrightarrow \Phi \otimes \left(  \left[
\begin{array}
[c]{cc}%
I_{r} & 0\\
0 & 0
\end{array}
\right]  \left[
\begin{array}
[c]{cc}%
P_{1} & 0\\
0 & I_{n-r}%
\end{array}
\right]  \left[
\begin{array}
[c]{cc}%
I_{r} & 0\\
0 & 0
\end{array}
\right]  \right)  +Sym\left \{  \Psi \otimes \left(  \left[
\begin{array}
[c]{cc}%
I_{r} & 0\\
0 & 0
\end{array}
\right]  \right.  \right. \\
\left.  \left.  \left[
\begin{array}
[c]{cc}%
P_{1} & 0\\
0 & I_{n-r}%
\end{array}
\right]  \left[
\begin{array}
[c]{cc}%
A_{1} & 0\\
0 & I_{n-r}%
\end{array}
\right]  \right)  +I_{d}\otimes \left(  \left[
\begin{array}
[c]{c}%
0\\
I_{n-r}%
\end{array}
\right]  \left(  -\varepsilon N_{2}^{T}\right)  N\right)  \right \}  <0
\end{gather*}
Utilizing (\ref{equAdmNece}) and let $\hat{P}=\left[
\begin{array}
[c]{cc}%
P_{1} & 0\\
0 & I_{n-r}%
\end{array}
\right]  >0$, we have
\begin{gather*}
\Phi \otimes \left(  N^{T}E^{T}M^{T}\hat{P}MEN\right)  +Sym\left \{  \Psi
\otimes \left(  N^{T}E^{T}M^{T}\hat{P}MAN\right)  \right. \\
\left.  +I_{d}\otimes \left(  N^{T}A^{T}M_{2}^{T}\left(  -\varepsilon N_{2}%
^{T}\right)  N\right)  \right \}  <0
\end{gather*}
Let $M^{T}\hat{P}M=P$, $M_{2}^{T}=E_{0}$ and $-\varepsilon N_{2}^{T}=Q$, where
$E_{0}$ is column full rank and $E^{T}E_{0}=0$. Since $N$ is nonsingular, we
get
\[
\Phi \otimes \left(  E^{T}PE\right)  +\Psi \otimes \left(  E^{T}PA\right)
+\Psi^{T}\otimes \left(  A^{T}PE\right)  +I_{d}\otimes \left(  A^{T}E_{0}%
Q+Q^{T}E_{0}^{T}A\right)  <0
\]
The theorem is proved.
\end{proof}

The condition (\ref{inequDStable}) is a strict linear matrix inequality. In
order to analyse the Robust problems of SFOS conveniently, the nonstrict LMI
condition is given as follows.

\begin{lemma}
\label{lemDgamaAdmis}\cite{Hsiung1997Pole} SIOS $(E_{I},A_{I})$ is
$\mathcal{D}_{\Gamma}$ (when$\  \Phi=0$) admissible if and only if there exists
a matrix $P\in \mathbb{R}^{n\times n}$ such that
\begin{align}
Sym\left \{  \Psi \otimes \left(  PA_{I}\right)  \right \}   &
<0\label{inequDgamaAdmis}\\
PE_{I} =E_{I}^{T}P^{T}  &  \geq0
\end{align}

\end{lemma}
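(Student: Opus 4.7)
The plan is to mimic the structure of the proof of Theorem \ref{theoDStable}, specialized to the case $\Phi = 0$ with the symmetry-plus-PSD condition $PE_I = E_I^T P^T \geq 0$ playing the role of the slack variable $Q$ from that theorem.

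\textbf{Necessity.} Assuming $\mathcal{D}_\Gamma$-admissibility, I first invoke the Weierstrass decomposition of the pencil (as in Theorem \ref{theoSFOSregular} specialized to integer order, with the nilpotent block forced to zero by impulse-freedom): there exist nonsingular $M, N$ with $MEN = \operatorname{diag}(I_r, 0)$ and $MAN = \operatorname{diag}(A_1, I_{n-r})$, and $A_1$ is $\mathcal{D}_\Gamma$-stable. By Lemma \ref{lemADstable} applied with $\Phi = 0$, there exists symmetric $X_1 > 0$ with $Sym\{\Psi \otimes (X_1 A_1)\} < 0$. I then pick a real scalar $\mu \in \mathcal{D}_\Gamma \cap \mathbb{R}$ and set $\hat P = \operatorname{diag}(X_1, \mu I_{n-r})$, so that the second diagonal contribution reads $Sym\{\Psi \otimes (\mu I_{n-r})\} = f_{\mathcal{D}_\Gamma}(\mu) \otimes I_{n-r} < 0$. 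Since $\Psi \otimes (\cdot)$ is permutation-similar to a Kronecker sum on block-diagonal matrices, the overall LMI $Sym\{\Psi \otimes (\hat P \hat A)\} < 0$ follows from its two diagonal pieces. Setting $P = M^T \hat P M$ completes the construction, and the symmetry-PSD constraint pulls back to $\hat P \hat E = \operatorname{diag}(X_1, 0) \geq 0$, which holds since $X_1 > 0$.

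\textbf{Sufficiency.} Conversely, assume the LMIs hold for some $P$; I establish regularity, impulse-freedom, and stability in turn. First, if $A_I v = 0$ with $v \neq 0$, then $(y \otimes v)^* Sym\{\Psi \otimes (P A_I)\}(y \otimes v) = 0$, contradicting strictness of the LMI; hence $A_I$ is nonsingular and the pencil is regular. Next, the SIOS analogue of Lemma \ref{lemSFOSIFb} reduces impulse-freedom to showing that $E_I v = 0$ together with $A_I v = E_I w$ force $v = 0$: using the symmetry $PE_I = E_I^T P^T$ I compute $v^* P A_I v = v^* P E_I w = (E_I^T P^T v)^* w = 0$, which again contradicts the strict LMI unless $v = 0$. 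Finally, for any finite eigenvalue $\lambda$ with $A_I v = \lambda E_I v$, sandwiching the LMI by $I_d \otimes v$ yields $\mu \, f_{\mathcal{D}_\Gamma}(\lambda) < 0$ where $\mu := v^* P E_I v$; the PSD constraint gives $\mu \geq 0$, and strictness rules out $\mu = 0$, so $f_{\mathcal{D}_\Gamma}(\lambda) < 0$, i.e.\ $\lambda \in \mathcal{D}_\Gamma$.

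\textbf{Main obstacle.} The most delicate point is the necessity construction of the infinite-subsystem block: solvability of $Sym\{\Psi \otimes X_2\} < 0$ depends on the geometry of $\mathcal{D}_\Gamma$, and my choice $X_2 = \mu I$ uses the real-axis symmetry of $\mathcal{D}_\Gamma$ (inherited from real $\Psi$) to secure a real $\mu \in \mathcal{D}_\Gamma$; an LMI region with no real interior points would force a more intricate, non-scalar block construction. A secondary subtlety on the sufficiency side is that $P$ need not be positive definite here, so one must lean on the strict LMI itself to upgrade the nonstrict bound $v^* PE_I v \geq 0$ to the strict positivity needed for any finite eigenvector.
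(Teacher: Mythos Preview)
The paper does not prove this lemma at all: it is quoted verbatim from \cite{Hsiung1997Pole} and used as a black box. So there is no in-paper argument to compare against.

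That said, your proposal is sound and parallels the paper's own proof of Theorem~\ref{theoDStable} closely, which is the natural template here. Two small points are worth flagging. First, in the necessity step your pull-back formula $P = M^T \hat P M$ is not the right congruence: with $MEN=\hat E$ and $MAN=\hat A$ you need $P = N^{-T}\hat P M$, so that $PE_I = N^{-T}\hat P\hat E N^{-1}$ is genuinely symmetric and $PA_I = N^{-T}\hat P\hat A N^{-1}$ is a congruence of $\hat P\hat A$; with $M^T\hat P M$ neither identity goes through. Second, your observation about needing a real point $\mu\in\mathcal D_\Gamma$ is well taken, and your resolution (convexity plus conjugation symmetry of a real-coefficient LMI region forces $\mathcal D_\Gamma\cap\mathbb R\neq\varnothing$ whenever $\mathcal D_\Gamma\neq\varnothing$) is exactly the right one; in the paper's intended application $\Psi=\Theta$ and the region contains the entire negative real axis, so the issue never bites. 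With the corrected transformation, both directions of your argument are complete.
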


\begin{remark}
Note that conditions (\ref{inequDStable}) and (\ref{inequDgamaAdmis}) do not
have to be regular and impulse-free, thus they can be used generally. For
$(E,A,\alpha)$, replace $E_{I},A_{I}$ by $E,A$\ respectively, the lemma
\ref{lemDgamaAdmis} is also true because the eigenvalues of SFOS and SIOS are equivalent.
\end{remark}

\begin{lemma}
\label{lemAdmIneqRes} $(E,A,\alpha)$ with $1<\alpha<2$ is admissible if and
only if there exist matrices $P=P^{T}>0$, $P\in%
\mathbb{R}
^{n\times n}$, $Q=Q^{T}$, $Q\in%
\mathbb{R}
^{n\times n}$ such that
\begin{align}
Sym\left \{  \Theta \otimes \left(  A^{T}PE\right)  \right \}  +I_{2}%
\otimes \left(  A^{T}QA\right)   &  <0\label{ineAdmine1}\\
E^{T}QE  &  \geq0 \label{ineAdmine2}%
\end{align}
where $\Theta=\left[
\begin{array}
[c]{cc}%
\sin \frac{\pi}{2}\alpha & -\cos \frac{\pi}{2}\alpha \\
\cos \frac{\pi}{2}\alpha & \sin \frac{\pi}{2}\alpha
\end{array}
\right]  $, $I_{2}$ has the same dimension with $\Theta$.
\end{lemma}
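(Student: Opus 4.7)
The plan is to prove the two implications separately, leveraging the canonical decomposition from Theorem \ref{theoSFOSregular}, the eigenvector-testing technique used in the proof of Theorem \ref{theoDStable}, and the normal-case Lyapunov inequality of Lemma \ref{lemNorStable}.

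\textbf{Necessity.} Since $(E,A,\alpha)$ is admissible, I would recycle the decomposition already exhibited in (\ref{equAdmNece}): there exist nonsingular $M,N$ with $MEN=\mathrm{diag}(I_r,0)$ and $MAN=\mathrm{diag}(A_1,I_{n-r})$, and Theorem \ref{theoStable} guarantees $|\arg(\mathrm{spec}(A_1))|>\pi\alpha/2$. Applying Lemma \ref{lemNorStable} to $A_1^{T}$ furnishes $P_1>0$ with $Sym\{\Theta\otimes(A_1^{T}P_1)\}<0$. In the transformed coordinates I would take $\tilde P=\mathrm{diag}(P_1,I_{n-r})$ and $\tilde Q=\mathrm{diag}(0,-\varepsilon I_{n-r})$ for a sufficiently small $\varepsilon>0$. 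A block-permutation bookkeeping (grouping the $r$-dimensional and $(n-r)$-dimensional coordinates) decouples the transformed LMI into the two negative-definite pieces $Sym\{\Theta\otimes(A_1^{T}P_1)\}<0$ and $-\varepsilon I_{2(n-r)}<0$, while $\tilde E^{T}\tilde Q\tilde E=0\ge 0$ is trivial. Pulling back by $P=M^{T}\tilde P M>0$ and $Q=M^{T}\tilde Q M$ (using $E=M^{-1}\tilde E N^{-1}$, $A=M^{-1}\tilde A N^{-1}$, together with a congruence by $I_2\otimes N^{-1}$) yields a pair satisfying (\ref{ineAdmine1})--(\ref{ineAdmine2}).

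\textbf{Sufficiency.} I would first establish regularity and impulse-freeness through Lemma \ref{lemSFOSIFb}. Suppose $v\neq 0$ and $\omega$ satisfy $Ev=0$ and $Av=E\omega$. Pre- and post-multiplying (\ref{ineAdmine1}) by $(I_2\otimes v)^{*}$ and $I_2\otimes v$ annihilates the $Sym\{\Theta\otimes(A^{T}PE)\}$ term (because $Ev=0$) and collapses the $I_2\otimes(A^{T}QA)$ term to $I_2\otimes(\omega^{*}E^{T}QE\omega)\ge 0$ via (\ref{ineAdmine2}), contradicting strict negative-definiteness; hence $v=0$. Specialising to $\omega=0$ also rules out a nonzero $v$ with $Ev=Av=0$, which secures regularity. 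For stability, let $\lambda$ be any finite eigenvalue with eigenvector $v\neq 0$, so $Av=\lambda Ev$. Impulse-freeness forces $Ev\neq 0$, giving $a:=v^{*}E^{T}PEv>0$. The same eigenvector test now produces
\[(\bar\lambda\,\Theta+\lambda\,\Theta^{T})\,a+I_2\,|\lambda|^{2}\,b<0,\qquad b:=v^{*}E^{T}QEv\ge 0.\]
Since $I_2|\lambda|^{2}b$ is positive semidefinite and $a>0$, the $2\times 2$ Hermitian matrix $\bar\lambda\Theta+\lambda\Theta^{T}$ must itself be negative definite, and a direct spectral computation identifies this with $|\arg\lambda|>\pi\alpha/2$; stability then follows from Theorem \ref{theoStable}.

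\textbf{Main obstacle.} The central analytic point is the equivalence $\bar\lambda\Theta+\lambda\Theta^{T}<0\Leftrightarrow|\arg\lambda|>\pi\alpha/2$, extracted from the eigenvalues $2|\lambda|\sin(\pi\alpha/2\pm\arg\lambda)$ of the $2\times 2$ Hermitian pencil; together with the observation that $E^{T}QE\ge 0$ is exactly what prevents the $I_2\otimes(A^{T}QA)$ correction from flipping the inequality on a finite eigendirection, this is what lets the single LMI (\ref{ineAdmine1}) simultaneously certify regularity, impulse-freeness, and $\mathcal{D}$-stability, while on the necessity side the only routine---but error-prone---step is the block-permutation bookkeeping that decouples the transformed LMI.
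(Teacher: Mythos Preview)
Your proof is correct and broadly parallel to the paper's, but the sufficiency argument for stability takes a genuinely different route. The paper, after establishing impulse-freeness, passes to the Weierstrass form $MEN=\mathrm{diag}(I,0)$, $MAN=\mathrm{diag}(A_1,I)$, congruence-transforms (\ref{ineAdmine1}) by $I_2\otimes N$, and reads off from the $(1,1)$ block that $Sym\{\Theta\otimes(A_1^{T}Y_{11})\}+I_2\otimes(A_1^{T}\hat Q_{11}A_1)<0$ with $\hat Q_{11}\ge 0$; this yields $Sym\{\Theta\otimes(A_1^{T}Y_{11})\}<0$ and stability follows from Lemma~\ref{lemNorStable}. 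You instead test (\ref{ineAdmine1}) directly against a generalized eigenvector and reduce to the explicit $2\times2$ Hermitian matrix $\bar\lambda\Theta+\lambda\Theta^{T}$, diagonalizing it by hand. This is closer in spirit to the sufficiency half of Theorem~\ref{theoDStable} and avoids a second appeal to the canonical decomposition; the paper's route, by contrast, never needs the eigenvalue computation because it outsources that step to Lemma~\ref{lemNorStable}. Your necessity construction is also slightly leaner: you take $\tilde Q=\mathrm{diag}(0,-\varepsilon I)$, whereas the paper uses $\hat Q=\mathrm{diag}(\varepsilon Y_{11},\hat Q_{22})$ with $\hat Q_{22}<0$ and carries the extra term $I_2\otimes(A_1^{T}\varepsilon Y_{11}A_1)$ through the inequality.

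One caution: your sentence ``specialising to $\omega=0$ \dots\ secures regularity'' is not quite right as stated, since $\ker E\cap\ker A=\{0\}$ alone does not force $\det(sE-A)\not\equiv 0$ (take $E=\left[\begin{smallmatrix}1&0\\0&0\end{smallmatrix}\right]$, $A=\left[\begin{smallmatrix}0&1\\0&0\end{smallmatrix}\right]$). What actually secures regularity is your \emph{stronger} conclusion that no nonzero $v$ satisfies $Ev=0$ and $Av\in\mathrm{range}(E)$: a Kronecker-form check shows every square non-regular pencil admits such a $v$. The paper glosses over this point as well---it asserts impulse-freeness and immediately invokes the regular/impulse-free decomposition---so this is a shared imprecision rather than a defect peculiar to your argument.
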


\begin{proof}
Sufficiency. Assume that $(E,A,\alpha)$ is impulsive, then there exists an
infinite eigenvector of order 2, $\upsilon^{2}\in%
\mathbb{R}
^{n}$ such that $E\upsilon^{2}=A\upsilon^{1}$, $E\upsilon^{1}=0$. By left
multiplying $(I_{2}\otimes \upsilon^{1})^{\ast}$ and right multiplying
$(I_{2}\otimes \upsilon^{1})$ on (\ref{ineAdmine1}), we have%
\begin{align}
&  (I_{2}\otimes \upsilon^{1})^{\ast}\left[  Sym\left \{  \Theta \otimes
(A^{T}PE)\right \}  +I_{2}\otimes(A^{T}QA)\right]  (I_{2}\otimes \upsilon
^{1})\nonumber \\
=  &  I_{2}\otimes \left[  (\upsilon^{1})^{\ast}A^{T}QA\upsilon^{1}\right]
\nonumber \\
=  &  I_{2}\otimes \left[  (\upsilon^{2})^{\ast}E^{T}QE\upsilon^{2}\right]  <0
\label{ineAdmine3}%
\end{align}
Inequality (\ref{ineAdmine3}) can't be true because $E^{T}QE\geq0$. Thus,
system $(E,A,\alpha)$ is impulse-free and two matrices $M$ and $N$ may be
chosen such that
\begin{equation}
MEN=%
\begin{bmatrix}
I_{m} & 0\\
0 & 0
\end{bmatrix}
,MAN=%
\begin{bmatrix}
A_{1} & 0\\
0 & I_{n-m}%
\end{bmatrix}
\end{equation}
Let $Y=M^{-T}PM^{-1}=%
\begin{bmatrix}
Y_{11} & Y_{22}\\
Y_{12}^{\ast} & Y_{22}%
\end{bmatrix}
$. Obviously, there holds $Y=Y^{\ast}>0$. Let $\widehat{Q}=M^{-T}QM^{-1}=%
\begin{bmatrix}
\widehat{Q}_{11} & \widehat{Q}_{22}\\
\widehat{Q}_{12}^{T} & \widehat{Q}_{22}%
\end{bmatrix}
$. From (\ref{ineAdmine2}) we have%
\begin{align}
&  N^{T}E^{T}QEN=N^{T}E^{T}M^{T}\widehat{Q}MEN\nonumber \\
=  &
\begin{bmatrix}
I_{m} & 0\\
0 & 0
\end{bmatrix}%
\begin{bmatrix}
\widehat{Q}_{11} & \widehat{Q}_{12}\\
\widehat{Q}_{12}^{T} & \widehat{Q}_{22}%
\end{bmatrix}%
\begin{bmatrix}
I_{m} & 0\\
0 & 0
\end{bmatrix}
\nonumber \\
=  &
\begin{bmatrix}
\widehat{Q}_{11} & 0\\
0 & 0
\end{bmatrix}
\geq0
\end{align}
Thus, we get $\widehat{Q}_{11}\geq0$.

Left multiply $(I_{2}\otimes N)^{T}$ and right multiply $I_{2}\otimes N$ on
(\ref{ineAdmine1}), we get
\begin{align}
&  (I_{2}\otimes N)^{T}\left(  Sym\left \{  \Theta \otimes(A^{T}PE)\right \}
+I_{2}\otimes(A^{T}QA)\right)  (I_{2}\otimes N)\nonumber \\
=  &  Sym\left \{  \Theta \otimes(N^{T}A^{T}M^{T}PMEN)\right \}  +I_{2}%
\otimes(N^{T}A^{T}M^{T}\widehat{Q}MAN)\nonumber \\
=  &  Sym\left \{  \Theta \otimes \left(
\begin{bmatrix}
A_{1}^{T} & 0\\
0 & I_{n-m}%
\end{bmatrix}%
\begin{bmatrix}
Y_{11} & Y_{12}\\
\bullet & Y_{22}%
\end{bmatrix}%
\begin{bmatrix}
I_{m} & 0\\
0 & 0
\end{bmatrix}
\right)  \right \} \nonumber \\
&  +I_{2}\otimes \left(
\begin{bmatrix}
A_{1}^{T} & 0\\
0 & I_{n-m}%
\end{bmatrix}%
\begin{bmatrix}
\widehat{Q}_{11} & \widehat{Q}_{12}\\
\bullet & \widehat{Q}_{22}%
\end{bmatrix}%
\begin{bmatrix}
A_{1} & 0\\
0 & I_{n-m}%
\end{bmatrix}
\right) \nonumber \\
=  &  Sym\left \{  \Theta \otimes%
\begin{bmatrix}
A_{1}^{T}Y_{11} & 0\\
Y_{12}^{T} & 0
\end{bmatrix}
\right \}  +I_{2}\otimes%
\begin{bmatrix}
A_{1}^{T}\widehat{Q}_{11}A_{1} & A_{1}^{T}\widehat{Q}_{12}\\
\bullet & \widehat{Q}_{22}%
\end{bmatrix}
<0 \label{ineAdmine4}%
\end{align}
According to \cite{Brewer1978Kronecker}, inequality (\ref{ineAdmine4}) is
equivalent to
\begin{equation}
Sym\left \{
\begin{bmatrix}
\Theta \otimes(A_{1}^{T}Y_{11}) & 0\\
\Theta \otimes Y_{12}^{T} & 0
\end{bmatrix}
\right \}  +%
\begin{bmatrix}
I_{2}\otimes(A_{1}^{T}\widehat{S}_{11}A_{1}) & I_{2}\otimes(A_{1}^{T}%
\widehat{Q}_{12})\\
\bullet & I_{2}\otimes \widehat{Q}_{22}%
\end{bmatrix}
<0
\end{equation}
Thus, we get
\begin{equation}
Sym\left \{  \Theta \otimes(A_{1}^{T}Y_{11})\right \}  +I_{2}\otimes \left(
A_{1}^{T}\widehat{Q}_{11}A_{1}\right)  <0
\end{equation}

Because $\widehat{Q}_{11}\geq0$, we have $Sym\left \{  \Theta \otimes(A_{1}%
^{T}Y_{11})\right \}  <0$. According to lemma \ref{lemNorStable}, $A_{1}$ is
stable. Thus $(E,A,\alpha)$ is stable. Finally, the admissibility of
$(E,A,\alpha)$ is achieved.

Necessary.

$(E,A,\alpha)$ is admissible, thus there exist two nonsingular matrices
$M,N,Y_{11}$ such that
\begin{align*}
&  MEN=%
\begin{bmatrix}
I_{m} & 0\\
0 & 0
\end{bmatrix}
,MAN=%
\begin{bmatrix}
A_{1} & 0\\
0 & I_{n-m}%
\end{bmatrix}
,\\
&  Sym\left \{  \Theta \otimes(A_{1}^{T}Y_{11})\right \}  <0.
\end{align*}

For a sufficiently small $\varepsilon >0$, we have%
\begin{equation}
Sym\left \{  \Theta \otimes(A_{1}^{T}Y_{11})\right \}  +I_{2}\otimes \left(
A_{1}^{T}\varepsilon Y_{11}A_{1}\right)  <0 \label{ineAdmine5}%
\end{equation}

Note 
\begin{align*}
P & =M^{T}YM=M^{T}%
\begin{bmatrix}
Y_{11} & 0\\
0 & I_{n-m}%
\end{bmatrix}M \\
Q & =M^{T}\widehat{Q}M=M^{T}%
\begin{bmatrix}
\widehat{Q}_{11} & 0\\
0 & \widehat{Q}_{2}%
\end{bmatrix}M
\end{align*}
where $\widehat{Q}_{11}=\varepsilon Y_{11}$ and $\widehat{Q}_{22}$ be any
negative definite matrix. From (\ref{ineAdmine5}) we get
\begin{equation}%
\begin{bmatrix}
Sym\left \{  \Theta \otimes(A_{1}^{T}Y_{11})\right \}  & 0\\
0 & 0
\end{bmatrix}
+%
\begin{bmatrix}
I_{2}\otimes(A_{1}^{T}\varepsilon Y_{11}A_{1}) & 0\\
0 & I_{2}\otimes \widehat{Q}_{22}%
\end{bmatrix}
<0.
\end{equation}

which is equivalent to%
\begin{align*}
&  Sym\left \{  \Theta \otimes%
\begin{bmatrix}
A_{1}^{T}Y_{11} & 0\\
0 & 0
\end{bmatrix}
\right \}  +I_{2}\otimes%
\begin{bmatrix}
A_{1}^{T}\varepsilon Y_{11}A_{1} & 0\\
0 & \widehat{Q}_{22}%
\end{bmatrix}
<0\\
\Leftrightarrow &  Sym\left \{  \Theta \otimes \left(  N^{T}A^{T}M^{T}%
PMEN\right)  \right \}  +I_{2}\otimes \left(  N^{T}A^{T}M^{T}QMAN\right)  <0\\
\Leftrightarrow &  (I_{2}\otimes N^{T})\left[  Sym\left \{  \Theta \otimes
(A^{T}PE)\right \}  +I_{2}\otimes(A^{T}QA)\right]  (I_{2}\otimes N)<0\\
\Leftrightarrow &  Sym\left \{  \Theta \otimes(A^{T}PE)\right \}  +I_{2}%
\otimes(A^{T}QA)<0
\end{align*}

and%
\begin{align*}
&
\begin{bmatrix}
\widehat{Q}_{11} & 0\\
0 & 0
\end{bmatrix}
\geq0\\
&  \Leftrightarrow N^{T}E^{T}M^{T}\widehat{Q}MEN\geq0\\
&  \Leftrightarrow N^{T}E^{T}QEN\geq0\\
&  \Leftrightarrow E^{T}QE\geq0
\end{align*}

This completes the proof.
\end{proof}

\begin{theorem}
\label{theoADM} The following statements are equivalent

\begin{enumerate}
\item System $(E,A,\alpha)$ ($1<\alpha<2$) is admissible;

\item Assume that $E_{0}\in \mathbb{R}^{n\times(n-r)}$ is column full rank and
$E^{T}E_{0}=0$, there exist symmetric positive matrix $P\in%
\mathbb{R}
^{n\times n}$ and metrix $Q\in \mathbb{R}^{(n-r)\times n}$ such that%
\begin{equation}
Sym\left \{  \Theta \otimes \left(  E^{T}PA\right)  +I\otimes \left(  Q^{T}%
E_{0}^{T}A\right)  \right \}  <0\label{inequLMIun}%
\end{equation}

\item There exists matrix $P\in \mathbb{R}^{n\times n}$ such that
\begin{gather}
Sym\left \{  \Theta \otimes \left(  PA\right)  \right \}  <0\nonumber \\
PE=E^{T}P^{T}\geq0
\end{gather}

\item there exist symmetric positive matrix $P\in%
\mathbb{R}
^{n\times n}$ and symmetric matrix $Q\in%
\mathbb{R}
^{n\times n}$ such that
\begin{align}
Sym\left \{  \Theta \otimes \left(  E^{T}PA\right)  \right \}  +I\otimes \left(
A^{T}QA\right)   &  <0\\
E^{T}QE  &  \geq0\nonumber
\end{align}

where $\Theta=\left[
\begin{array}
[c]{cc}%
\sin \frac{\pi}{2}\alpha & -\cos \frac{\pi}{2}\alpha \\
\cos \frac{\pi}{2}\alpha & \sin \frac{\pi}{2}\alpha
\end{array}
\right]  $ and $I$ has the same dimension with $\Theta$.
\end{enumerate}
\end{theorem}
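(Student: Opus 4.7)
The plan is to recognize that the theorem is essentially a consolidation of the three admissibility characterizations already proved, each specialized to the fractional stability region
\[
\Lambda=\left\{\lambda\in\mathbb{C}\ \middle|\ |\arg\lambda|>\tfrac{\pi}{2}\alpha\right\}.
\]
The first step will be to verify that, for $1<\alpha<2$, $\Lambda$ is an LMI region of the type $\mathcal{D}_{\Gamma}$, i.e.\ of the form (\ref{DRegion}) with $\Phi=0$ and $\Psi=\Theta$, where $\Theta$ is the matrix appearing in Lemma \ref{lemNorStable} and in the statement. A short direct computation of $f_{\mathcal{D}}(z)=z\Theta+\bar z\Theta^{T}$ shows that the condition $f_{\mathcal{D}}(z)<0$ is equivalent to $|\arg z|>\pi\alpha/2$; once this is in place, ``admissible'' in the sense of Definition \ref{defSFOSadimis} becomes identical to ``$\Lambda$-admissible'' by Theorem \ref{theoStable}.

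Next I would prove each equivalence separately, invoking exactly one previous result in each case. The implication (1)$\Leftrightarrow$(2) is the specialization of Theorem \ref{theoDStable} to $\Phi=0$ and $\Psi=\Theta$: substituting these into $M_{s}(E,A,P,Q)$ makes the first term vanish and produces precisely the inequality (\ref{inequLMIun}). The implication (1)$\Leftrightarrow$(3) is the specialization of Lemma \ref{lemDgamaAdmis} to the same $\Psi=\Theta$, justified by the remark following that lemma (the SFOS and the associated SIOS share the same finite eigenstructure, so the $\mathcal{D}_{\Gamma}$-admissibility characterization transfers verbatim after replacing $E_{I},A_{I}$ with $E,A$). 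The implication (1)$\Leftrightarrow$(4) is literally the content of Lemma \ref{lemAdmIneqRes} and requires no additional argument beyond a citation.

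Having established each of (2), (3), (4) as equivalent to (1), I will close the loop by chaining: (2)$\Leftrightarrow$(1)$\Leftrightarrow$(3) and (1)$\Leftrightarrow$(4), which suffices for the stated equivalence of all four conditions. The writeup will be short, essentially a table of pointers to the earlier statements with the $\Lambda$-LMI identification done once at the beginning.

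I do not expect a serious obstacle: the real mathematical content has already been handled in Theorem \ref{theoDStable}, Lemma \ref{lemDgamaAdmis}, and Lemma \ref{lemAdmIneqRes}. The only delicate point is the algebraic verification that $\Lambda$ admits the required $\{\Phi=0,\Psi=\Theta\}$ LMI representation for $1<\alpha<2$ (rather than $0<\alpha<1$, where a different representation is customary); I will carry out the $2\times 2$ Hermitian calculation explicitly to make this unambiguous and to confirm that $\sin(\pi\alpha/2)>0$ on the relevant range so the diagonal sign condition gives $\mathrm{Re}(z)<0$ in the right part of the wedge.
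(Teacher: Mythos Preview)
Your proposal is correct and matches the paper's own proof essentially line for line: the paper simply observes that for $1<\alpha<2$ the stable region is an LMI region with $\Phi=0$ and $\Psi=\Theta$, and then cites Theorem~\ref{theoDStable}, Lemma~\ref{lemDgamaAdmis}, and Lemma~\ref{lemAdmIneqRes} for the three equivalences. Your additional plan to verify the $\{\Phi=0,\Psi=\Theta\}$ representation of $\Lambda$ explicitly is a bit more careful than the paper, which takes this for granted.
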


\begin{proof}
When $1<\alpha<2$, the stable region of SFOS is a LMI region, in which case
$\Phi=0$ and $\Psi=\left[
\begin{array}
[c]{cc}%
\sin \frac{\pi}{2}\alpha & -\cos \frac{\pi}{2}\alpha \\
\cos \frac{\pi}{2}\alpha & \sin \frac{\pi}{2}\alpha
\end{array}
\right]  $. For such $\Phi$ and $\Psi$ and according to Theorem \ref{theoDStable}, Lemma \ref{lemDgamaAdmis} and Lemma \ref{lemAdmIneqRes}%
, the conclusion is achieved.
\end{proof}

\section{Robust admissibility analysis}

To the best of our knowledge, there exists no research on robust admissibility
of uncertain SFOS. In this section, sufficient conditions are given to check
the robust admissibility of the uncertain SFOS.

Consider the following uncertain SFOS%

\begin{equation}
ED^{\alpha}x(t)=Ax(t)=(A_{0}+D_{A}F_{A}E_{A})x(t) \label{sysUSFOS}%
\end{equation}

where $1<\alpha<2$ and $A_{0}\in \mathbb{R}^{n\times n}$, $D_{A}\in
\mathbb{R}^{n\times p},E_{A}\in \mathbb{R}^{q\times n}$ are given certain
matrices. The uncertain matrix $F_{A}\in \mathbb{R}^{p\times q}$ satisfies
\begin{equation}
F_{A}F_{A}^{T}<I_{p} \label{ineUSFOS}%
\end{equation}

\begin{theorem}
System (\ref{sysUSFOS}) is robust admissible if there exist matrices
$X=X^{T}>0$, $X\in%
\mathbb{R}
^{n\times n}$, $S\in%
\mathbb{R}
^{(n-m)\times n}$ such that%
\begin{equation}
\begin{bmatrix}
Z_{11} & Z_{12} & Z_{13}\\
\bullet & Z_{22}  & Z_{23}\\
\bullet & \bullet & Z_{33}
\end{bmatrix}  <0 \label{ineRobAdm1}
\end{equation}

where $E^{T}E_{0}=0$, $I_{2}$ is a $2\times2$ matrix and $\Theta=\left[
\begin{array}
[c]{cc}%
\sin \frac{\pi}{2}\alpha & -\cos \frac{\pi}{2}\alpha \\
\cos \frac{\pi}{2}\alpha & \sin \frac{\pi}{2}\alpha
\end{array}
\right]  $ and

\begin{align*}
Z_{11}= & Sym\left \{  \Theta \otimes(A_{0}^{T}XE)+I_{2}\otimes(A_{0}^{T}E_{0}S)\right \}
+2I_{2}\otimes(E_{A}^{T}E_{A})\\
Z_{12}= & I_{2}\otimes(E^{T}XD_{A})\\
Z_{13}= & I_{2}\otimes(S^{T}E_{0}^{T}D_{A})\\
Z_{22}= & -I_{2}\otimes I \\
Z_{23}= & 0\\
Z_{33}= & -I_{2}\otimes I
\end{align*}
\end{theorem}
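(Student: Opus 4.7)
The plan is to reduce the robust admissibility question to the nominal one by invoking Theorem \ref{theoADM}(2) applied to the perturbed pair $(E,A_0+D_A F_A E_A,\alpha)$, and then to absorb the $F_A$-dependent cross terms into quadratic forms that can be recognized as the Schur complement of (\ref{ineRobAdm1}). Concretely, let $X$ and $S$ play the role of $P$ and $Q$ in Theorem \ref{theoADM}(2); I would first write robust admissibility as the requirement that
\[
\Omega(F_A)\triangleq Sym\left\{\Theta\otimes\bigl(E^{T}X(A_{0}+D_{A}F_{A}E_{A})\bigr)+I_{2}\otimes\bigl(S^{T}E_{0}^{T}(A_{0}+D_{A}F_{A}E_{A})\bigr)\right\}<0
\]
hold uniformly in every admissible uncertainty $F_A$ (i.e.\ $F_AF_A^T<I_p$). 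Split $\Omega(F_A)=\Omega_0+\Omega_F$ into the nominal part $\Omega_0=Sym\{\Theta\otimes(E^{T}XA_0)+I_{2}\otimes(S^{T}E_{0}^{T}A_0)\}$ and the perturbation $\Omega_F$ involving $F_A$.

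The heart of the argument is to bound the two cross terms of $\Omega_F$ by matrices that are independent of $F_A$. For the first one, I would factor
\[
\Theta\otimes(E^{T}XD_{A}F_{A}E_{A})=\bigl(\Theta\otimes(E^{T}XD_{A})\bigr)\bigl(I_{2}\otimes F_{A}\bigr)\bigl(I_{2}\otimes E_{A}\bigr),
\]
and then apply Lemma \ref{lemIneMa} in the standard Young form ($X^{T}Y+Y^{T}X\le X^{T}X+Y^{T}Y$, i.e.\ $\Lambda=I$) to obtain
\[
Sym\{\Theta\otimes(E^{T}XD_{A}F_{A}E_{A})\}\le (\Theta\Theta^{T})\otimes(E^{T}XD_{A}D_{A}^{T}XE)+I_{2}\otimes(E_{A}^{T}F_{A}^{T}F_{A}E_{A}).
\]
At this point I use two key facts: (i) $\Theta$ is orthogonal, so $\Theta\Theta^{T}=I_{2}$, which collapses the first summand to $I_{2}\otimes(E^{T}XD_{A}D_{A}^{T}XE)$; and (ii) $F_{A}F_{A}^{T}<I_{p}$ implies $F_{A}^{T}F_{A}\le I_{q}$, so the second summand is dominated by $I_{2}\otimes(E_{A}^{T}E_{A})$. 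The same manoeuvre applied to the term with $S^{T}E_{0}^{T}D_{A}F_{A}E_{A}$ yields the bound $I_{2}\otimes(S^{T}E_{0}^{T}D_{A}D_{A}^{T}E_{0}S)+I_{2}\otimes(E_{A}^{T}E_{A})$. Adding everything together gives the $F_A$-free upper bound
\[
\Omega(F_A)\le \Omega_0+2\,I_{2}\otimes(E_{A}^{T}E_{A})+I_{2}\otimes(E^{T}XD_{A}D_{A}^{T}XE)+I_{2}\otimes(S^{T}E_{0}^{T}D_{A}D_{A}^{T}E_{0}S).
\]

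The last step is to recognize the right-hand side as the Schur complement of (\ref{ineRobAdm1}). Since $Z_{22}=Z_{33}=-I_{2}\otimes I<0$ and $Z_{23}=0$, the block matrix in (\ref{ineRobAdm1}) is negative definite iff
\[
Z_{11}-Z_{12}Z_{22}^{-1}Z_{12}^{T}-Z_{13}Z_{33}^{-1}Z_{13}^{T}<0,
\]
which spells out exactly to the displayed bound being negative definite. Therefore (\ref{ineRobAdm1}) implies $\Omega(F_A)<0$ for every admissible $F_A$, and Theorem \ref{theoADM}(2) delivers robust admissibility.

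The main obstacle I anticipate is bookkeeping: making the Kronecker factorization of the $F_A$-containing block align cleanly so that the Young-type inequality produces precisely $\Theta\Theta^{T}\otimes(\cdot)$ on one side and $I_{2}\otimes(E_{A}^{T}F_{A}^{T}F_{A}E_{A})$ on the other. Once the orthogonality $\Theta\Theta^{T}=I_{2}$ is used, everything else is a formal Schur complement; the only other subtle point is deducing $F_{A}^{T}F_{A}\le I_{q}$ from $F_{A}F_{A}^{T}<I_{p}$ (equal singular values give the same operator norm bound, hence the rectangular form of the uncertainty is harmless).
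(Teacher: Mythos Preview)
Your proposal is correct and follows essentially the same route as the paper: Schur complement on (\ref{ineRobAdm1}), Lemma \ref{lemIneMa} together with $\Theta\Theta^{T}=I_{2}$ and the uncertainty bound to absorb the $F_A$-terms, then Theorem \ref{theoADM}(2). The only cosmetic differences are that the paper runs the chain forward (from the LMI to the admissibility inequality) and factors the cross term so that $F_A$ sits with $D_A$ rather than with $E_A$, using $F_AF_A^{T}<I_p$ directly instead of the derived bound $F_A^{T}F_A\le I_q$; neither change affects the argument.
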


\begin{proof}
Invoking Schur complement, inequality (\ref{ineRobAdm1}) is equivalent to%
\begin{align}
&  Sym\left \{  \Theta \otimes(A_{0}^{T}XE)+I_{2}\otimes(A_{0}^{T}%
E_{0}S)\right \}  +2I_{2}\otimes(E_{A}^{T}E_{A})\nonumber \\
+  &  I_{2}\otimes \left(  (E^{T}XD_{A}D_{A}^{T}XE)\right)  +I_{2}%
\otimes \left(  (S^{T}E_{0}^{T}D_{A}D_{A}^{T}E_{0}S)\right)  <0
\label{ineRobAdm2}%
\end{align}

From (\ref{ineUSFOS}) and (\ref{ineRobAdm1}), we get%
\begin{align}
&  Sym\left \{  \Theta \otimes(A_{0}^{T}XE)+I_{2}\otimes(A_{0}^{T}%
E_{0}S)\right \}  +(\Theta \Theta^{T})\otimes(E_{A}^{T}E_{A})+I_{2}\otimes
(E_{A}^{T}E_{A})\nonumber \\
+  &  I_{2}\otimes \left(  (E^{T}XD_{A}F_{A}F_{A}^{T}D_{A}^{T}XE)\right)
+I_{2}\otimes \left(  (S^{T}E_{0}^{T}D_{A}F_{A}F_{A}^{T}D_{A}^{T}%
E_{0}S)\right)  <0 \label{ineRobAdm3}%
\end{align}

According to lemma \ref{lemIneMa} and inequality (\ref{ineRobAdm3}), we get%
\begin{align*}
&  Sym\left \{  \Theta \otimes(A_{0}^{T}XE)+I_{2}\otimes(A_{0}^{T}%
E_{0}S)\right \} \\
&  +Sym\left \{  \Theta \otimes((D_{A}F_{A}E_{A})^{T}XE)\right \}  +Sym\left \{
I_{2}\otimes((D_{A}F_{A}E_{A})^{T}(E_{0}S))\right \}  <0\\
\Leftrightarrow &  Sym\left \{  \Theta \otimes \left(  (A_{0}+D_{A}F_{A}%
E_{A})^{T}XE\right)  +I_{2}\otimes \left(  (A_{0}+D_{A}F_{A}E_{A})^{T}%
E_{0}S\right)  \right \}  <0\\
\Leftrightarrow &  Sym\left \{  \Theta \otimes \left(  A^{T}XE\right)
+I_{2}\otimes \left(  A^{T}E_{0}S\right)  \right \}  <0
\end{align*}

Thus, according to theorem \ref{theoADM}, system (\ref{sysUSFOS}) is robust admissible.

The theorem is proved.
\end{proof}

\begin{theorem}
System (\ref{sysUSFOS}) is robust admissible if there exist matrices
$X=X^{T}>0,X\in \mathbb{R}^{n\times n},Y=Y^{T}>0,Y\in \mathbb{R}^{n\times n}$
and $S=S^{T},S\in \mathbb{R}^{n\times n}$ such that%
\begin{equation}
\begin{bmatrix}
Z_{11} & Z_{12} & Z_{13}\\
\bullet & Z_{22} & Z_{23}\\
\bullet & \bullet & Z_{33}
\end{bmatrix}  <0 \label{ineRoAdmine1}%
\end{equation}%
\begin{equation}
E^{T}SE\geq0 \label{ineRoAdmine2}%
\end{equation}

where $\Theta=\left[
\begin{array}
[c]{cc}%
\sin \frac{\pi}{2}\alpha & -\cos \frac{\pi}{2}\alpha \\
\cos \frac{\pi}{2}\alpha & \sin \frac{\pi}{2}\alpha
\end{array}
\right]  $ and 
\begin{align*}
Z_{11}= & Sym\left \{  \Theta \otimes(A_{0}^{T}XE)\right \}  
+I_{2}\otimes(A_{0}^{T}SA_{0})\\
Z_{12}= & \Theta^{T}\otimes(E^{T}XD_{A})+I_{2}\otimes(A_{0}^{T}SD_{A})\\
Z_{13}= & I_{2}\otimes YE_{A}^{T}\\
Z_{22}= & I_{2}\otimes(S-Y)\\
Z_{23}= & 0\\
Z_{33}= & -I_{2}\otimes Y
\end{align*}
\end{theorem}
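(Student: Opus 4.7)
The plan is to verify the admissibility characterization in Theorem \ref{theoADM} statement 4 with the choices $P := X$ and $Q := S$, applied to the uncertain matrix $A = A_0 + D_A F_A E_A$. The second admissibility condition $E^T Q E \geq 0$ is exactly hypothesis (\ref{ineRoAdmine2}), so only the first LMI,
\[
Sym\{\Theta \otimes (E^T X A)\} + I_2 \otimes (A^T S A) < 0,
\]
needs to be established for every $F_A$ satisfying (\ref{ineUSFOS}).

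First I would substitute $A = A_0 + D_A F_A E_A$ into the target and expand, isolating the certain part (which matches $Z_{11}$) and three uncertainty contributions: two linear-in-$F_A$ cross terms that combine into $Sym\{Z_{12}(I \otimes F_A E_A)\}$, and a quadratic-in-$F_A$ term $I_2 \otimes (E_A^T F_A^T D_A^T S D_A F_A E_A)$. What remains is to show that these uncertainty contributions are dominated by what is left of the hypothesis LMI (\ref{ineRoAdmine1}) after Schur-complement elimination of the auxiliary $Y$-blocks.

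Next I would apply the Schur complement to (\ref{ineRoAdmine1}) on the bottom-right $2\times 2$ block. Since $Z_{23}=0$, $Y>0$, and $S<Y$ (forced by $Z_{22}<0$), this yields a reduced LMI in $Z_{11}$ corrected by $Z_{12}(I_2\otimes(Y-S)^{-1})Z_{12}^T$ and $Z_{13}(I_2\otimes Y^{-1})Z_{13}^T$. I would then apply Lemma \ref{lemIneMa} with scaling matrices built from $Y$ and $Y-S$ so that the linear cross term $Sym\{Z_{12}(I\otimes F_A E_A)\}$ is controlled by the $Z_{12}$-contribution plus an $F_A E_A$-quadratic residue; the uncertainty bound $F_A F_A^T < I_p$ (equivalently $F_A^T F_A \leq I_q$) then replaces residual $F_A$ factors by identity, and the $Z_{13}$-contribution is calibrated to absorb the remaining $E_A^T(\cdot) E_A$ piece.

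The main obstacle is the quadratic-in-$F_A$ term $I_2\otimes(E_A^T F_A^T D_A^T S D_A F_A E_A)$: because $S$ is only symmetric and in general indefinite, it cannot be bounded above directly. This is precisely why the slack matrix $Y$ and the constraint $S<Y$ (encoded in $Z_{22}$) are essential: the splitting $S = Y - (Y-S)$ with $Y-S>0$ lets one dominate the indefinite $S$-quadratic by a $Y$-quadratic plus a positive definite correction that the Young-type inequality of Lemma \ref{lemIneMa} can handle. Matching the various leftovers exactly to the $Z_{13}$ and $Z_{33}$ blocks, rather than merely bounding them crudely, is the delicate bookkeeping that makes the aggregate estimate close, after which Theorem \ref{theoADM} delivers robust admissibility of (\ref{sysUSFOS}).
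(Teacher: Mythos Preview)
Your overall strategy---reduce to Theorem \ref{theoADM}(4) with $P=X$, $Q=S$---matches the paper's, but the mechanism you propose for absorbing the uncertainty is different from the paper's and harder to close than you suggest. The paper does \emph{not} use Lemma \ref{lemIneMa} or the splitting $S=Y-(Y-S)$. Instead it rewrites the target inequality as a quadratic form in the augmented vector $\bigl[\,I_2\otimes v;\ I_2\otimes F_AE_Av\,\bigr]$, observes that the uncertainty bound $F_AF_A^{T}<I$ yields a second (sign--definite) quadratic form in the same augmented vector, and then applies the \emph{S-procedure}: the existence of a scalar multiplier $\tau>0$ (which becomes $Y=\tau I$) making the combined matrix inequality negative definite is sufficient, and one Schur complement recovers (\ref{ineRoAdmine1}). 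The indefinite block $D_A^{T}SD_A$ is never bounded in isolation---it simply sits in the $(2,2)$ position of the combined matrix and is rendered negative by subtracting $\tau I$.

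Your route via Lemma \ref{lemIneMa} runs into exactly the obstacle you flag but do not resolve. After bounding the cross term $Sym\{Z_{12}(I_2\otimes F_AE_A)\}$ by a Young-type estimate with weight $(Y-S)^{-1}$, the residual you pick up is $(I_2\otimes F_AE_A)^{T}\bigl[I_2\otimes(Y-S)\bigr](I_2\otimes F_AE_A)$; together with the original indefinite quadratic this leaves $E_A^{T}F_A^{T}\bigl[(Y-S)+D_A^{T}SD_A\bigr]F_AE_A$, which is \emph{not} obviously dominated by the $Z_{13},Z_{33}$ contribution (an $E_A^{T}(\cdot)E_A$ term with no $F_A$). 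The splitting $S=Y-(Y-S)$ does not by itself fix this: you still need the $F_A$ factors to migrate outside, and that is precisely what the S-procedure (equivalently, the congruence by $[I;\ I_2\otimes F_AE_A]$ applied to the two-block Schur reduction of (\ref{ineRoAdmine1})) accomplishes in one stroke. So the missing idea is the S-procedure; once you invoke it, both Lemma \ref{lemIneMa} and the $S=Y-(Y-S)$ splitting become unnecessary.
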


\begin{proof}
According to theorem \ref{theoADM}, uncertain system (\ref{sysUSFOS}) is
robust admissible if for any $F_{A}$, there holds
\begin{align*}
&  Sym\left \{  \Theta \otimes \left[  (A_{0}+D_{A}F_{A}E_{A})^{T}XE\right]
\right \} \\
+  &  I_{2}\otimes \left[  (A_{0}+D_{A}F_{A}E_{A})^{T}S(A_{0}+D_{A}F_{A}%
E_{A})\right]  <0
\end{align*}

i.e.%
\begin{align}
&  \left(  I_{2}\otimes \upsilon^{T}\right)  (Sym\left \{  \Theta \otimes \left[
(A_{0}+D_{A}F_{A}E_{A})^{T}XE\right]  \right \} \nonumber \\
+  &  I_{2}\otimes \left[  (A_{0}+D_{A}F_{A}E_{A})^{T}S(A_{0}+D_{A}F_{A}%
E_{A})\right]  )\left(  I_{2}\otimes \upsilon \right)  <0 \label{ineRoAdmine3}%
\end{align}

The inequality (\ref{ineRoAdmine3}) can be rewritten as%
\begin{align}
&
\begin{bmatrix}
I_{2}\otimes \upsilon^{T}I_{2}\otimes(F_{A}E_{A}\upsilon)^{T}%
\end{bmatrix}
\nonumber \\
\times &
\begin{bmatrix}
Sym\left \{  \Theta \otimes(A_{0}^{T}XE)\right \}  +I_{2}\otimes(A_{0}^{T}%
SA_{0}) & \Theta^{T}\otimes(E^{T}XD_{A})+I_{2}\otimes(A_{0}^{T}SD_{A})\\
\bullet & I_{2}\otimes S
\end{bmatrix}
\nonumber \\
\times &
\begin{bmatrix}
I_{2}\otimes \upsilon \\
I_{2}\otimes(F_{A}E_{A}\upsilon)
\end{bmatrix}
<0 \label{ineRoAdmine4}%
\end{align}

From inequality (\ref{ineUSFOS}), we get%
\begin{align}
&  \left[  I_{2}\otimes \upsilon^{T}I_{2}\otimes(F_{A}E_{A}\upsilon
)^{T}\right]
\begin{bmatrix}
I_{2}\otimes(E_{A}^{T}E_{A}) & 0\\
0 & -I_{2}\otimes I
\end{bmatrix}%
\begin{bmatrix}
I_{2}\otimes \upsilon \\
I_{2}\otimes(F_{A}E_{A}\upsilon)
\end{bmatrix}
\nonumber \\
=  &  \left(  I_{2}\otimes \upsilon^{T}\right)  \left[  I_{2}\otimes \left(
E_{A}^{T}E_{A}-E_{A}^{T}F_{A}^{T}F_{A}E_{A}\right)  \right]  \left(
I_{2}\otimes \upsilon \right) \nonumber \\
=  &  \left(  I_{2}\otimes \upsilon^{T}\right)  \left[  I_{2}\otimes \left(
E_{A}^{T}E_{A}(I-F_{A}^{T}F_{A})\right)  \right]  \left(  I_{2}\otimes
\upsilon \right)  >0 \label{ineRoAdmine5}%
\end{align}

By applying the $S$-procedure, inequalities (\ref{ineRoAdmine4}) and
(\ref{ineRoAdmine5}) derive that there exists some scalar $\tau>0$ such that%
\begin{align}
&
\begin{bmatrix}
Sym\left \{  \Theta \otimes(A_{0}^{T}XE)\right \}  +I_{2}\otimes(A_{0}^{T}%
SA_{0}) & \Theta^{T}\otimes(E^{T}XD_{A})+I_{2}\otimes(A_{0}^{T}SD_{A})\\
\bullet & I_{2}\otimes S
\end{bmatrix}
\nonumber \\
&  +\tau%
\begin{bmatrix}
I_{2}\otimes(E_{A}^{T}E_{A}) & 0\\
0 & -I_{2}\otimes I
\end{bmatrix}
<0\nonumber \\
\Leftrightarrow &
\begin{bmatrix}
Sym\left \{  \Theta \otimes(A_{0}^{T}XE)\right \}  +I_{2}\otimes(A_{0}^{T}%
SA_{0}) & \Theta^{T}\otimes(E^{T}XD)+I_{2}\otimes(A_{0}^{T}SD)\\
\bullet & I_{2}\otimes S-I_{2}\otimes(\tau I)
\end{bmatrix}
\nonumber \\
&  +\tau%
\begin{bmatrix}
I_{2}\otimes E_{A}^{T}\\
0
\end{bmatrix}%
\begin{bmatrix}
I_{2}\otimes E_{A} & 0
\end{bmatrix}
<0 \label{ineRoAdmine6}%
\end{align}
Let $Y=\tau I$ and invoking Schur Complement, inequality (\ref{ineRoAdmine1})
is obtained.

The theorem is proved.
\end{proof}

\begin{theorem}
System (\ref{sysUSFOS}) is robust admissible if there exist matrix $X\in%
\mathbb{R}
^{n\times n}$, such that%
\begin{equation}
\left[
\begin{array}
[c]{cc}%
Sym\left \{  \Theta \otimes(A_{0}^{T}X)\right \}  +I_{2}\otimes(E_{A}^{T}E_{A}) &
I_{2}\otimes(XD_{A})\\
\bullet & -I_{2}\otimes I
\end{array}
\right]  <0 \label{ineRoeq1}%
\end{equation}%
\begin{equation}
E^{T}X=XE\geq0 \label{ineRoeq2}%
\end{equation}

where $\Theta=\left[
\begin{array}
[c]{cc}%
\sin \frac{\pi}{2}\alpha & -\cos \frac{\pi}{2}\alpha \\
\cos \frac{\pi}{2}\alpha & \sin \frac{\pi}{2}\alpha
\end{array}
\right]  $.
\end{theorem}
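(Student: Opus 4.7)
The plan is to reduce this to statement 3 of Theorem \ref{theoADM} applied to the closed-loop matrix $A = A_{0}+D_{A}F_{A}E_{A}$, with $P = X$. The second hypothesis of that statement, $PE = E^{T}P^{T} \geq 0$, is precisely the given (\ref{ineRoeq2}): indeed, $XE \geq 0$ forces $XE$ to be symmetric so that $XE = (XE)^{T} = E^{T}X^{T}$, and the equation $E^{T}X = XE$ then yields $XE = E^{T}X^{T} \geq 0$ as required. It therefore suffices to prove
\[
Sym\{\Theta \otimes (XA)\} < 0 \qquad \text{for every } F_{A} \text{ with } F_{A}F_{A}^{T} < I_{p},
\]
which is equivalent, by symmetry of $Sym\{\cdot\}$ under transposition, to showing $Sym\{\Theta \otimes (A^{T}X)\} < 0$.

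The first step is to apply the Schur complement to (\ref{ineRoeq1}), using the negative definite $(2,2)$-block $-I_{2}\otimes I$, to obtain the equivalent inequality
\[
Sym\{\Theta \otimes (A_{0}^{T}X)\} + I_{2}\otimes (E_{A}^{T}E_{A}) + I_{2}\otimes (XD_{A}D_{A}^{T}X^{T}) < 0. \quad (\ast)
\]
Next I split the nominal and perturbation parts,
\[
Sym\{\Theta \otimes (A^{T}X)\} = Sym\{\Theta \otimes (A_{0}^{T}X)\} + Sym\{\Theta \otimes (E_{A}^{T}F_{A}^{T}D_{A}^{T}X)\},
\]
and bound the cross term by applying Lemma \ref{lemIneMa} with $\mathcal{U} = \Theta \otimes (D_{A}^{T}X)$, $\mathcal{V} = I_{2}\otimes (F_{A}E_{A})$, and $\Lambda = I$. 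A direct Kronecker computation shows $\mathcal{U}^{T}\mathcal{V} + \mathcal{V}^{T}\mathcal{U} = Sym\{\Theta \otimes (E_{A}^{T}F_{A}^{T}D_{A}^{T}X)\}$, and since $\Theta^{T}\Theta = \Theta\Theta^{T} = I_{2}$,
\[
\mathcal{U}^{T}\mathcal{U} = I_{2}\otimes (X^{T}D_{A}D_{A}^{T}X), \qquad \mathcal{V}^{T}\mathcal{V} = I_{2}\otimes (E_{A}^{T}F_{A}^{T}F_{A}E_{A}).
\]
Because $F_{A}F_{A}^{T}$ and $F_{A}^{T}F_{A}$ have the same nonzero eigenvalues, $F_{A}F_{A}^{T} < I_{p}$ implies $F_{A}^{T}F_{A} < I_{q}$ and hence $E_{A}^{T}F_{A}^{T}F_{A}E_{A} \leq E_{A}^{T}E_{A}$. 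This yields
\[
Sym\{\Theta \otimes (E_{A}^{T}F_{A}^{T}D_{A}^{T}X)\} \leq I_{2}\otimes (X^{T}D_{A}D_{A}^{T}X) + I_{2}\otimes (E_{A}^{T}E_{A}).
\]

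Combining this bound with $(\ast)$ and identifying $X^{T}D_{A}D_{A}^{T}X$ with $XD_{A}D_{A}^{T}X^{T}$ (forced by the symmetry of $X$, which follows from (\ref{ineRoeq2}) and mirrors the Lyapunov matrices used in the two preceding robust admissibility theorems) produces $Sym\{\Theta \otimes (A^{T}X)\} < 0$, equivalently $Sym\{\Theta \otimes (XA)\} < 0$. Statement 3 of Theorem \ref{theoADM} with $P = X$ now gives that $(E,A,\alpha)$ is admissible; since $F_{A}$ was an arbitrary admissible uncertainty, system (\ref{sysUSFOS}) is robustly admissible. The main technical obstacle is the cross-term step: one must choose $\mathcal{U},\mathcal{V}$ so that both the Kronecker factor lands on $\Theta\Theta^{T} = I_{2}$ (eliminating the $\Theta$-dependence on the right) and the matrix factor reproduces exactly the $XD_{A}D_{A}^{T}X^{T}$ block generated by the Schur complement. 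The choice above is essentially forced by this double bookkeeping requirement.
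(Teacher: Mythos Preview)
Your argument is essentially the paper's own: apply the Schur complement to (\ref{ineRoeq1}), bound the cross term $Sym\{\Theta\otimes((D_{A}F_{A}E_{A})^{T}X)\}$ via Lemma~\ref{lemIneMa} together with $\Theta\Theta^{T}=I_{2}$ and $F_{A}F_{A}^{T}<I$, and then invoke statement~3 of Theorem~\ref{theoADM}. The only cosmetic difference is the order in which you use $F_{A}F_{A}^{T}<I$ and Lemma~\ref{lemIneMa}; the paper first replaces $I_{2}\otimes(E_{A}^{T}E_{A})$ by $(\Theta\Theta^{T})\otimes(E_{A}^{T}E_{A})$ and inserts $F_{A}F_{A}^{T}$ into the $D_{A}$-block before applying the lemma, whereas you apply the lemma first and then bound $E_{A}^{T}F_{A}^{T}F_{A}E_{A}\le E_{A}^{T}E_{A}$.

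One caution: your claim that (\ref{ineRoeq2}) forces $X=X^{T}$ is not justified, since $E$ is singular; the condition $E^{T}X=XE\ge 0$ constrains $X$ only on the range of $E$. The paper sidesteps this by silently writing $XD_{A}D_{A}^{T}X$ in place of $XD_{A}D_{A}^{T}X^{T}$ after the Schur complement, i.e.\ it implicitly treats $X$ as symmetric as well. So your proof and the paper's share the same tacit assumption; just be aware that the justification you offered for it does not actually go through.
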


\begin{proof}
Invoking Schur Complement, inequality (\ref{ineRoeq1}) is equivalent to
\begin{equation}
Sym\left \{  \Theta \otimes(A_{0}^{T}X)\right \}  +I_{2}\otimes(E_{A}^{T}%
E_{A})+I_{2}\otimes(XD_{A}D_{A}^{T}X)<0 \label{ineRoeq3}%
\end{equation}

From inequalities (\ref{ineUSFOS}) and (\ref{ineRoeq3}), we get%
\begin{equation}
Sym\left \{  \Theta \otimes(A_{0}^{T}X)\right \}  +(\Theta \Theta^{T}%
)\otimes(E_{A}^{T}E_{A})+I_{2}\otimes(XD_{A}F_{A}F_{A}^{T}D_{A}^{T}X)<0
\label{ineRoeq4}%
\end{equation}

According to lemma \ref{lemIneMa} and inequality (\ref{ineRoeq4}), we have%
\begin{align*}
&  Sym\left \{  \Theta \otimes(A_{0}^{T}X)\right \}  +Sym\left \{  \Theta
\otimes((D_{A}F_{A}E_{A})^{T}X)\right \}  <0\\
\Leftrightarrow &  Sym\left \{  \Theta \otimes \left(  (A_{0}+D_{A}F_{A}%
E_{A})^{T}X\right)  \right \}  <0\\
\Leftrightarrow &  Sym\left \{  \Theta \otimes \left(  A^{T}X\right)  \right \}
<0.
\end{align*}

Thus, according to theorem \ref{theoADM}, the system (\ref{sysUSFOS}) is
robust admissible.

The theorem is proved.
\end{proof}

\section{Numerical examples}

\subsection{Numerical solution in time domain}

Now we will get the numerical solution of SFOS.

System $(E,A,\alpha)$ can be decomposed into%
\[
\left \{
\begin{array}
[c]{c}%
D^{\alpha}x_{1}\left(  t\right)  =A_{1}x_{1}\left(  t\right) \\
ND^{\alpha}x_{2}\left(  t\right)  =x_{2}\left(  t\right)
\end{array}
\right.
\]

Thus we have to get $A_{1}$ and $N$. N'Doye \cite{NDoye2010Stabilization} has
proved that system $(E,A,\alpha)$ is regular if and only if $det(cE-A)$ is not
identically zero. Thus, $(cE-A)^{-1}$ exists. Define
\[
\hat{E}=(cE-A)^{-1}E,\  \hat{A}=(cE-A)^{-1}A
\]

Thus
\begin{align*}
\hat{A}  &  =(cE-A)^{-1}(cE+A-cE)\\
&  =c(cE-A)^{-1}E-I\\
&  =c\hat{E}-I
\end{align*}

According to standard Jordan matrix decomposition, there exists nonsingular
matrix $T$ such that
\[
T\hat{E}T^{-1}=diag(\hat{E}_{1},\hat{E}_{2})
\]
where $T\in \mathbb{R}^{n\times n}$; $\hat{E}_{1}\in \mathbb{R}^{n_{1}\times
n_{1}}$ is nonsingular; $\hat{E}_{2}\in \mathbb{R}^{n_{2}\times n_{2}}$ is a
nilpotent matrix. Thus, $c\hat{E}_{2}-I$ is nonsingular. Let
\begin{align*}
Q &=diag(\hat{E}_{1}^{-1},(c\hat{E}_{2}-I)^{-1})T(cE-A)^{-1}\\
P &=T^{-1}
\end{align*}

Then, we get
\begin{align*}
QEP  &  =diag(\hat{E}_{1}^{-1},(c\hat{E}_{2}-I)^{-1})T(cE-A)^{-1}ET^{-1}\\
&  =diag(I_{n_{1}},(c\hat{E}_{2}-I)^{-1}\hat{E}_{2})
\end{align*}
and
\begin{align*}
QAP  &  =diag(\hat{E}_{1}^{-1},(c\hat{E}_{2}-I)^{-1})T(cE-A)^{-1}AT^{-1}\\
&  =diag(cI_{n_{1}}-\hat{E}_{1}^{-1},I_{n_{2}})
\end{align*}

Therefore, we finally get $A_{1}$ and $N$
\[
A_{1}=\hat{E}_{1}^{-1}(c\hat{E}_{1}-I),\ N=(c\hat{E}_{2}-I)^{-1}\hat{E}_{2}
\]

Because $A_{1}$ is a constant matrix, by using Riemann-Liouville fractional
integral, we get (\cite{Luchko1999operational,Podlubny1999Fractional})
\[
x_{1}(t)-\underset{k=0}{\overset{m-1}{%
{\textstyle \sum}
}}x_{1}^{(k)}(0)\frac{t^{k}}{k!}=\frac{A_{1}}{\Gamma(\alpha)}\int
\limits_{0}^{t}x_{1}(\tau)(t-\tau)^{\alpha-1}d\tau
\]
where $m-1<\alpha \leq m$.

And according to Diethelm \cite{Diethelm2002predictor}
\[
\int_{0}^{t_{n+1}}(t_{n+1}-\tau)^{\alpha-1}x_{1}(t)d\tau \approx \frac
{z^{\alpha}}{\alpha(\alpha+1)}\underset{j=0}{\overset{n+1}{\sum}}%
a_{j,n+1}x_{1}(t_{j})
\]
where $z=t_{j+1}-t_{j}$ and

$a_{j,n+1}=\left \{  \begin{aligned}
&n^{\alpha+1}-(n-\alpha)(n+1)^{\alpha},&if\ j=0\\
&(n-j+2)^{\alpha+1}+(n-j)^{\alpha+1}-2(n-j+1)^{\alpha+1},&if\ 1\leq j\leq n\\
&1,&if\ j=n+1
\end{aligned}
\right.  $

In order to calculate $x_{1}(t_{n+1})$, Diethelm \cite{Diethelm2002predictor} predicts
the integral as%

\[
\int \nolimits_{0}^{t_{n+1}}(t_{n+1}-\tau)^{\alpha-1}x_{1}(\tau)d\tau
\approx \sum \limits_{j=0}^{n}b_{j,n+1}x_{1}(t_{j})
\]
where $b_{j,n+1}=\dfrac{z^{\alpha}}{\alpha}((n+1-j)^{\alpha}-(n-j)^{\alpha})$.
Thus, $x_{1}(t_{n+1})$ can be calculated by%

\[
x_{1}(t_{n+1})=\underset{k=0}{\overset{m-1}{%
{\textstyle \sum}
}}x_{1}^{(k)}(0)\dfrac{t^{k}}{k!}+\dfrac{z^{\alpha}}{\Gamma(\alpha+2)}%
A_{1}x_{1}^{p}(t_{n+1})+\dfrac{z^{\alpha}}{\Gamma(\alpha+2)}A_{1}%
\sum \limits_{j=0}^{n}a_{j,n+1}x_{1}(t_{j})
\]
where $x_{1}^{p}(t_{n+1})=\underset{k=0}{\overset{m-1}{%
{\textstyle \sum}
}}x_{1}^{(k)}(0)\dfrac{t^{k}}{k!}+\dfrac{1}{\Gamma(\alpha)}A_{1}%
\sum \limits_{j=0}^{n}b_{j,n+1}x_{1}(t_{j})$.

And according to solution (\ref{equx1x2}), $x_{2}\left(  t_{n}\right)  $ can
directly be calculated by
\[
x_{2}\left(  t_{n}\right)  =-\underset{k=1}{\overset{h-1}{%
{\textstyle \sum}
}}N^{k}\left(  \delta^{\left(  k\alpha-1\right)  }\left(  t_{n}\right)
x_{20}+\delta^{\left(  k\alpha-2\right)  }\left(  t_{n}\right)  x_{20}%
^{\left(  1\right)  }\right)
\]

Finally, we can get
\[
x(t_{n})=P\left[
\begin{array}
[c]{c}%
x_{1}(t_{n})\\
x_{2}\left(  t_{n}\right)
\end{array}
\right]
\]

\subsection{Numerical solution}

In this section, we verify the inequality (\ref{inequLMIun}) of theorem \ref{theoADM} as an example.

Consider system $(E,A,\alpha)$ with parameters $\alpha=1.8$, $A=\left[
\begin{array}
[c]{ccc}%
-1 & 0 & -1\\
0 & -2 & 0\\
0 & -1 & -1
\end{array}
\right]  $, $E=\left[
\begin{array}
[c]{ccc}%
1 & 0 & 0\\
1 & 1 & -1\\
0 & 0 & 0
\end{array}
\right]  $. And $E_{0}=\left[
\begin{array}
[c]{c}%
0\\
0\\
1
\end{array}
\right]  $ can be chosen to satisy $E^{T}E_{0}=0.$

Then, by solving LMI (\ref{inequLMIun}), we get %

\[
P=\left[
\begin{array}
[c]{ccc}%
1.7896 & -0.2755 & -0.5029\\
-0.2755 & 0.8271 & -0.6667\\
-0.5029 & -0.6667 & 1.5113
\end{array}
\right]  ,\text{ }Q=[%
\begin{array}
[c]{ccc}%
-0.043 & 0.3709 & 0.3849
\end{array}
]
\]

It means the system is admissible. Eigenvalues of the system is shown in figure \ref{FigEig}. From figure \ref{FigEig}, we can find that all the
eigenvalues of $(E,A)$ lie in the stable area. State response of the system is shown in figure \ref{FigCurve}, which implies that
the system is stable.

\begin{figure}
\centering
  \includegraphics[width=70mm]{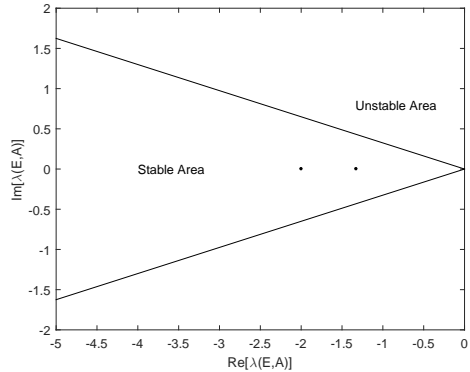}\\
  \caption{Eigenvalues of the system}\label{FigEig}
\end{figure}
\begin{figure}
  \centering
  \includegraphics[width=140mm]{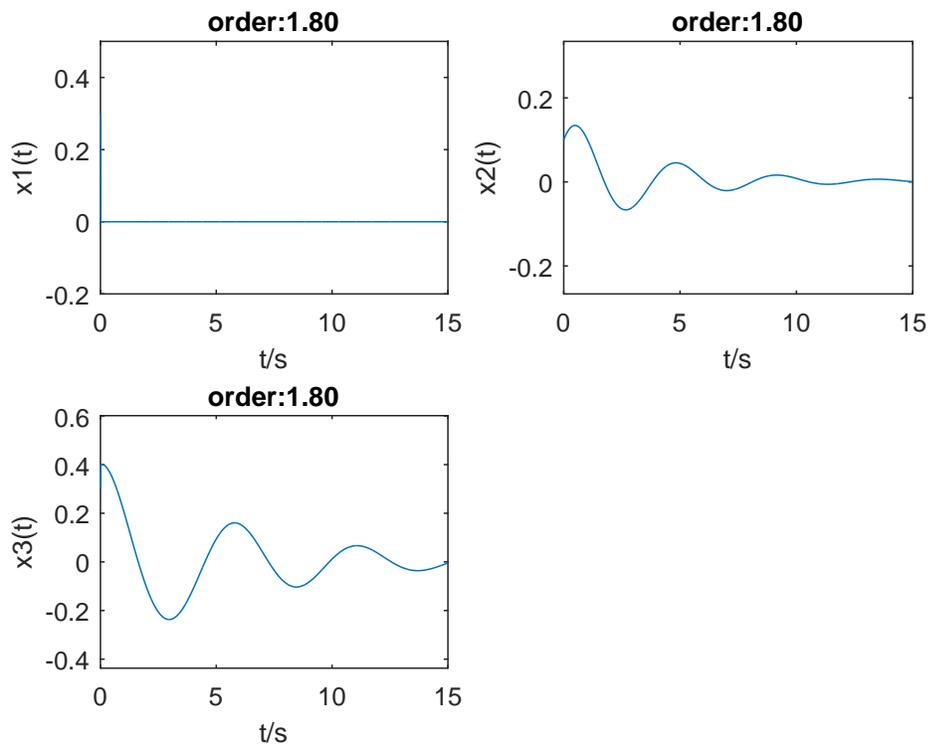}\\
  \caption{State response of the system}\label{FigCurve}
\end{figure}

\section{Conclusion}

In this paper, singular fractional order system with fractional order $1<\alpha<2$
has been studied. The regularity and impulse-free of SFOS are proved in time domain.
 Then, this paper analysed sufficient and necessary conditions of stability and admissibility, respectively.
 After that, sufficient conditions of robust admissibility were given. Finally, numerical example was illustrated
 to verify proposed theorem.


\bibliographystyle{unsrt}
\bibliography{Generalized-Fractional-Order-System}

\end{document}